\def \equi#1{\mathrel{\mathop{\kern 0pt\sim}\limits_{#1}}}
\newcommand*{\legendre}[1]{\left(\frac{#1}{p}\right)}
\newcommand{\Matrix}[1]{\begin{pmatrix} #1 \end{pmatrix}}
\newcommand*{\jacob}[2]{\left(\frac{#1}{#2}\right)}
\def\im{{\rm Im }}
\def\re{{\rm Re }}
\def\Sh{{\rm Sh }}
\newtheorem{prop}{Proposition}
\newtheorem{cor}{Corollary}
\newtheorem{theo}{Theorem}
\newtheorem{lemme}{Lemma}
\theoremstyle{definition}
\newtheorem{rmq}{Remark}
\newcommand{\setC}{\mathbb{C}}
\newcommand{\setQ}{\mathbb{Q}}
\newcommand{\setR}{\mathbb{R}}
\newcommand{\setU}{\mathbb{U}}
\newcommand{\setZ}{\mathbb{Z}}
\newcommand{\A}{\mathcal{A}}
\newcommand{\B}{\mathcal{B}}
\newcommand{\calH}{\mathcal{H}}
\newcommand{\T}{\mathcal{T}}
\numberwithin{equation}{section}
\title{Sign of Fourier coefficients of half-integral weight modular forms in arithmetic progressions}
\author{\textsc{Corentin DARREYE}}
\address{Univ. Bordeaux, CNRS, Bordeaux INP, IMB, UMR 5251,  F-33400, Talence, France}
\email{corentin.darreye@u-bordeaux.fr}
\begin{document}
\renewcommand{\proofname}{Proof}
\renewcommand{\epsilon}{\varepsilon}
\renewcommand{\le}{\leqslant}
\renewcommand{\ge}{\geqslant}

\maketitle

\begin{abstract}

Let $f$ be a half-integral weight cusp form of level $4N$ for odd and squarefree $N$ and let $a(n)$ denote its 
$n^{\rm th}$ normalized
Fourier coefficient. Assuming that all the coefficients $a(n)$ are real, we study the sign of $a(n)$ when $n$ runs through an
 arithmetic progression. As a consequence, we establish a lower bound for the number of integers 
$n\le x$ such that $a(n)>n^{-\alpha}$ where $x$ and $\alpha$ are positive and $f$ is not necessarily a Hecke eigenform. 
\end{abstract}

\tableofcontents

\section{Introduction}

Let $f$ be an element of $S_{\ell+1/2}(4N)$, the space of cusp forms of weight $\ell+1/2$, of level $4N$ and of trivial character 
modulo $4N$. Write its Fourier expansion as 
$$f(z)=\sum_{n\ge1}a(n)n^{\frac{\ell-1/2}{2}}e(nz)$$
for $\im \:z>0$. Given positive real numbers $\alpha$ and $x$ and a class $a$ modulo a prime number $p$, we are interested 
in giving a lower bound on the
 number of integers $n\le x$ such that $n=a\:[p]$ and $a(n)>n^{-\alpha}$ (or $a(n)<-n^{-\alpha}$ respectively). 
 As far as we know, this specific problem 
 for half-integral weight modular forms has not been studied before. Of course, when the weight is an integer, such a question 
 can be partially answered using Sato-Tate equidistribution for Hecke eigenvalues (see \cite[Theorem B]{BLGHT}). 

\vskip 0.5 cm
\subsection{The sign of Fourier coefficients}

In the recent years,  the sign of coefficients of half-integral weight modular forms has drawn considerable attention.
 As a matter of fact, this subject comes from a question asked by Kohnen. Define $f$ as previously and assume that 
 it is a complete
 Hecke eigenform. If $t$ is a positive squarefree integer then, by Waldspurger's  formula, one knows that 
the value of $a(t)^2$ is essentially proportional to the central value $L(1/2,\Sh f\times\chi_t)$ where $\Sh f$ is the Shimura lift
of $f$ and $\chi_t$ is an explicit  Dirichlet character depending on $t$. 

Thus, Kohnen's question is : what squareroot of $L(1/2,\Sh f\times\chi_t)$
 corresponds to $a(t)$? In other words, assume that the coefficients $a(n)$ are real, then what is the sign of $a(t)$ if it is
 nonzero  and what could we say about the other coefficients $a(n)$?
 
 Bruinier and Kohnen \cite{BruiKoh} first showed that under some classical hypothesis,
  the sequence $(a(tn^2))_{n\ge1}$ has infinitely many sign
  changes and equidistribution results were established in \cite{InamWiese} and \cite{Arias} for the sign of this sequence. 
  In \cite{Hulse}, the authors studied the case of the sequence $(a(t))_t$ for all squarefree $t$ and for $f$ of level 4. 
  They showed that there are also infinitely many sign changes in this sequence and  a lower bound for the number of positive
   (respectively negative) $a(t)$ for $t\le x$ is given in \cite{sign}. 
   
   Then, Meher and Murty \cite{MeherMurty} turned their attention to the whole
  sequence $(a(n))_{n\ge1}$ when $f$ is a complete eigenform in Kohnen's plus space of level 4. 
  Detecting the sign changes, they proved in particular that
  \begin{equation}
\label{lowboundplus}
 \big|\big\{n\le x\;\vline\; a(n)\lessgtr 0\big\}\big|\gg_{f,\epsilon} x^{27/70-\epsilon}
\end{equation}
for any $\epsilon>0$. In \cite{sign2}, the authors sharpened the exponent $27/70-\epsilon$ to $1/2$
and generalized it when $n$ runs through an  arithmetic progression of fixed modulus.

Very recently, Lester and Radziwi\l\l\;\cite{LesRad} studied this problem and they showed that under the previous assumptions, 
 there are, for any $\epsilon>0$ and for $x$ large enough, at least $x^{1-\epsilon}$ sign changes in 
 $(a(n))_n$ where $1\le n\le x$ 
 and $n$ is a fundamental 
 discriminant of the form $n=4t$ with $t$ even and squarefree. This improves drastically the bound in \eqref{lowboundplus} but
 actually they did better on this matter. Using a result of Ono and Skinner \cite{OnoSkin}, they gave a rapid and elegant proof 
 of the fact that if $f$ is suitably normalized, then we have
$$ \big|\big\{n\le x\:\vline\: a(n)\lessgtr 0\big\}\big|\gg_{f} \frac{x}{\log x}$$ 
 and the proof can be easily adapted when $f\in S_{\ell+1/2}(4N)$ for any integer $N$ assuming that the hypotheses of 
 \cite[Fundamental Lemma]{OnoSkin} hold and that $f$ is a complete eigenform.

\vskip 0.5 cm
\subsection{Principal results}
Fix $f$ as before. Let  $\alpha$ be a positive real number and let $a$ be a class modulo an integer $q$. 
We consider
\begin{align}
\label{defT+}
\T_{a,q}^+(x ;\alpha)&=\big|\big\{ n\le x \;\vline\;  n=a\:[q] \text{ \:and\: }a(n)> n^{-\alpha} \big\}\big|,\\
\T_{a,q}^-(x;\alpha)&=\big|\big\{   n\le x \;\vline\; n=a\:[q] \text{\: and\: }a(n)< -n^{-\alpha} \big\}\big|
\label{defT-}
\end{align}
 for any $x>0$. We also put $\T^\pm(x;\alpha)=\T_{0,1}^\pm(x ;\alpha)$.
 
 Using recent results on sums of Fourier coefficients of half-integral weight modular forms in  arithmetic progressions, 
 we prove a lower bound for  
 $\T_{a,p}^\pm(x ;\alpha)$ for some fixed $\alpha$ and for a positive proportion of $a\:[p]$. Here, $p$ is a prime such that $p$ 
 and $x$ are both going to infinity in a certain range. 
 
 We distinguish two cases. The first one is when $f$ is not necessarily a Hecke eigenform.
  In that case, we prove that there exists a positive proportion of $a\:[p]$\footnote{Here and in the rest of the paper, a positive 
 proportion means a number of $a\:[p]$ which is $\gg p$} such that at least one coefficient $a(n)$ with $n=a\:[p]$ and 
 $n\le x$ satisfies $a(n)> n^{-\alpha}$ for some positive $\alpha$.
 
 The second case is when $f$ is a complete Hecke eigenform. Then, we establish a lower bound on the number of $a\:[p]$
 such that both $\T_{a,p}^+(x ;\alpha)$ and $\T_{a,p}^-(x ;\alpha)$ are bigger than $\frac{x^{1-\epsilon}}{p^{7/4}}$ for 
 positive $ \epsilon$.
 
 We emphasize the fact that there are two novelties in this work. 
 First, we are not just looking at the signs of the coefficients, we also provide lower bounds for $|a(n)|$. 
 Moreover, we include the case where $f$ is not necessarily a Hecke eigenform which is significantly 
  different from the previous papers about the signs of coefficients of half-integral weight cusp forms. 
 Indeed, in the works we mentioned above, the assumption that $f$ is an eigenform is crucial since,  in that case,  
 Shimura's correspondence is quite explicit on the coefficients (see \eqref{relationShiExplicite} below)  and one can
  apply Waldspurger's formula.
  \\
 
 Let us now state the main theorem of this paper.

\begin{theo}
\label{theo-lowboundT}
Let $f\in S_{\ell+1/2}(4N)\backslash\{0\}$ where $\ell$ and $N$ are two positive integers with $N$ odd and squarefree. 
If $\ell=1$, we assume that $f$ is
in the orthogonal complement of the subspace spanned by single variable theta-functions. We also assume that the
 Fourier coefficients of $f$ are real.

Then, for any $\epsilon>0$ and any $\alpha\in(3/14,1/4]$, there exists a constant $x_0=x_0(f,\epsilon,\alpha)$ such that for all  
$x_0\le x^{1-2\alpha+\epsilon}\ll p\ll x^{4/7-\epsilon}$ with $p$ a prime number, we have
$$\T_{a,p}^+(x ;\alpha)\ge 1$$
 for a positive proportion of $a\:[p]$ and where $\T_{a,p}^+(x ;\alpha)$  is defined in \eqref{defT+}. The same holds for
 $\T_{a,p}^-(x ;\alpha)$.

If, moreover, we assume that $f$ is a complete Hecke eigenform, then for any $\epsilon>0$, any $\delta>0$ small enough 
and any $\alpha\in(1/8,1/7]$, there exists a constant $x_0=x_0(f,\epsilon, \delta, \alpha)$ such that for all  
$x_0\le x^{1/2+\epsilon}\ll p\ll x^{4\alpha-\epsilon}$ with $p$ a prime number, we have
$$\T_{a,p}^\pm(x ;\alpha)\gg \frac{x^{1-2\delta}}{p^{7/4}}$$
 for a number of $a\:[p]$ which is $\gg_{f,\delta} \frac{p^{3/4}}{x^{\delta/2}}$.

\end{theo}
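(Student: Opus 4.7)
My plan rests on translating existence/counting of sign-constrained coefficients into a problem about their first moment in arithmetic progressions modulo $p$. For $a \bmod p$ set
$$S_a(x) := \sum_{\substack{n \le x \\ n \equiv a\,[p]}} a(n).$$
If $\T_{a,p}^+(x;\alpha) = 0$ then every $n \equiv a\,[p]$ with $n \le x$ satisfies $a(n) \le n^{-\alpha}$, so
$$S_a(x) \le \sum_{\substack{n \le x \\ n \equiv a\,[p]}} n^{-\alpha} \ll \frac{x^{1-\alpha}}{p}$$
(the negative coefficients only make $S_a$ smaller). Therefore $S_a(x) \gg x^{1-\alpha}/p$ forces $\T_{a,p}^+(x;\alpha) \ge 1$, and replacing $f$ by $-f$ handles $\T^-$. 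The whole first statement reduces to: for a positive proportion of $a \bmod p$, $|S_a(x)| \gg x^{1-\alpha}/p$.

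For that, I would combine a lower bound on the variance
$$V(x,p) := \sum_{a \bmod p} S_a(x)^2 = \sum_{n_1 \equiv n_2\,[p]} a(n_1) a(n_2)$$
with an $\ell^\infty$ bound on $|S_a(x)|$. The diagonal in $V$ equals $\sum_{n \le x} a(n)^2 \sim c_f\, x$ (Rankin--Selberg for half-integral weight), while the off-diagonal, written via Plancherel as $\tfrac{1}{p} \sum_{h \ne 0\,[p]} |\sum_{n \le x} a(n)\, e(hn/p)|^2$, is absorbed in the range $p \ll x^{4/7-\epsilon}$ using the existing estimates on twisted sums of half-integral weight Fourier coefficients alluded to in the introduction. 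The same inputs give a pointwise bound $\max_a |S_a(x)| \ll x^{1/2+\epsilon}\, p^{-1/2}$, and a Chebyshev-type inequality then produces $\gg p$ classes $a$ with $|S_a(x)| \gg (x/p)^{1/2}$. Finally, $p \gg x^{1-2\alpha+\epsilon}$ is exactly what makes $(x/p)^{1/2} \gtrsim x^{1-\alpha}/p$.

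For the stronger statement when $f$ is a Hecke eigenform I would exploit the explicit Shimura formula
$$a(tm^2) = a(t) \sum_{d \mid m} \mu(d) \chi_t(d)\, d^{\,\ell-1} \lambda_F(m/d),$$
where $F = \Sh f$ has Hecke eigenvalues $\lambda_F$. Fix a squarefree $t$ with $a(t) \ne 0$ of each sign (guaranteed by the hypotheses, in particular the orthogonality to theta functions when $\ell=1$). The condition $n = tm^2 \equiv a\,[p]$ becomes $m^2 \equiv a t^{-1}\,[p]$, i.e.\ $m$ lies in at most two residue classes modulo $p$. Counting such $m \le \sqrt{x/t}$ for which the above convolution has prescribed sign and is not too small is an integral-weight problem to which subconvexity for twists of $F$, Sato--Tate equidistribution, and quantitative sign-change results for $\lambda_F$ in arithmetic progressions all apply; together with a variance argument in the $m$-variable modulo $p$, optimizing against the pointwise bounds produces the enlarged range $p \ll x^{4\alpha-\epsilon}$ and the quantitative counts $x^{1-2\delta}/p^{7/4}$ and $p^{3/4}/x^{\delta/2}$.

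The main technical obstacle throughout is the off-diagonal estimate in $V(x,p)$, equivalently the bound for twisted exponential sums $\sum_{n \le x} a(n)\, e(hn/p)$ with $\gcd(h,p)=1$. The ceiling $p \ll x^{4/7-\epsilon}$ in the first part (and $p \ll x^{4\alpha-\epsilon}$ in the eigenform refinement) is exactly what this estimate imposes, and any quantitative improvement of it would propagate into a wider admissible range for $\alpha$.
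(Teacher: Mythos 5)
Your reduction of the first assertion to showing that $S_a(x)\gg x^{1-\alpha}/p$ for many $a$ is the same as the paper's (it uses the smooth analogue $E(x,p,a)$, Lemma \ref{lemme-sumnalpha}, and the condition $p\gg x^{1-2\alpha+\epsilon}$ in exactly the way you describe). But the step that is supposed to produce a positive proportion of such $a$ has a genuine gap. You propose to combine the variance lower bound $\sum_a S_a(x)^2\gg x$ with the pointwise bound $\max_a|S_a(x)|\ll x^{1/2+\epsilon}p^{-1/2}$. That pointwise bound is not available: it asserts square-root cancellation in \emph{each individual} progression, whereas the known input (the second-moment asymptotic of Theorem \ref{theo-moment24}) only gives square-root cancellation \emph{on average} over $a$. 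After Vorono\u\i\ summation the best individual bound one gets is of the shape $E(x,p,a)\ll Y^{1/2}=O(p/\sqrt{x})$, which is a positive power of $x$ in the relevant range, and even if one had the bound with an $x^{\epsilon}$ loss, the Chebyshev count would only yield $p/x^{2\epsilon}$ classes, not $\gg p$. The paper closes this gap differently: it proves an \emph{upper bound for the fourth moment in arithmetic progressions}, $\frac1p\sum_a^{\times}E(x,p,a)^4\le 12(c_f\|w\|_2^2)^2+o(1)$, and combines it with the second moment via H\"older to deduce $\sum_a^{\times}|E(x,p,a)|\gg p$; this is the technical heart of the argument (Salié sums, the polynomial $Q_4$, linear independence of square roots, and the coefficient bound $|a(tn^2)|\ll t^{3/14+\epsilon}$), and it — not the off-diagonal of the variance — is what imposes the ceiling $p\ll x^{4/7-\epsilon}$; the second moment alone is valid up to $p\ll x^{1-\epsilon}$.

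The second assertion is also not reachable along the route you sketch. Fixing a single squarefree $t$ and looking at $n=tm^2\equiv a\ [p]$ leaves at most about $\sqrt{x/t}/p$ admissible integers $n\le x$ in each class, and in the range $p\ll x^{4\alpha-\epsilon}\le x^{4/7}$ one has $\sqrt{x}/p< x^{1-2\delta}/p^{7/4}$, so a single $t$ cannot produce the claimed count $\T^{\pm}_{a,p}\gg x^{1-2\delta}/p^{7/4}$ no matter how well you control the signs of $\lambda_F(m)$; one must aggregate over all squarefree $t$. The paper's mechanism is again a moment argument: Waldspurger's formula plus Heath-Brown's quadratic large sieve give $\sum_{n\le x}|a(n)|^4\ll x^{1+\epsilon}$ (Proposition \ref{prop-4moment}), which by Markov controls $\sum_{n\equiv a}a(n)^4w(n/x)^4$ for most $a$; H\"older against the second moment in the progression then lower-bounds $\sum_{n\equiv a}|a(n)|w(n/x)$, the smallness of the signed sum splits this mass between positive and negative coefficients, and Lemma \ref{lemma-elmt2} converts it into the counts $x^{1-2\delta}/p^{7/4}$ for $\gg p^{3/4}x^{-\delta/2}$ classes. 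Subconvexity and Sato--Tate for $\lambda_F$ play no role. I would encourage you to redirect your effort toward proving the two fourth-moment bounds, since everything else in your outline is essentially sound.
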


We will then deduce the following corollary.

\begin{cor}\label{cor-sign}
Let $f\in S_{\ell+1/2}(4N)\backslash\{0\}$ as in Theorem \ref{theo-lowboundT} (but not necessarily a Hecke eigenform)
 with $N$ odd and squarefree. Then,
$$ \T^\pm(x;3/14+ \epsilon)\gg x^{4/7-\epsilon}$$ 
for any $\epsilon>0$ and $x$ large enough.
\end{cor}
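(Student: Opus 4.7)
The plan is to deduce Corollary \ref{cor-sign} directly from the first (non-eigenform) statement of Theorem \ref{theo-lowboundT} by summing over residue classes modulo a single, well-chosen prime $p$ of size nearly $x^{4/7}$. The core observation is the trivial partition
$$\T^\pm(x;\alpha) \;=\; \sum_{a \bmod p}\T_{a,p}^\pm(x;\alpha) \;\ge\; \bigl|\bigl\{a \bmod p : \T_{a,p}^\pm(x;\alpha) \ge 1\bigr\}\bigr|,$$
so every residue class in which Theorem \ref{theo-lowboundT} guarantees at least one coefficient of the required sign and size contributes $1$ to $\T^\pm$.

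Given $\epsilon>0$, I would apply Theorem \ref{theo-lowboundT} with $\alpha = 3/14 + \epsilon$ (admissible since $\alpha \in (3/14,1/4]$ for $\epsilon$ small) and with an auxiliary parameter $\epsilon_0 := \epsilon/2$ playing the role of the ``$\epsilon$'' in the hypothesis. The admissible window for $p$ then reads
$$x^{1-2\alpha+\epsilon_0} \;=\; x^{4/7 - 3\epsilon/2} \;\ll\; p \;\ll\; x^{4/7 - \epsilon/2},$$
which is non-empty once $x$ is large enough. By Bertrand's postulate one can select a prime $p$ with $p \asymp x^{4/7 - \epsilon}$ lying comfortably inside this window. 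Theorem \ref{theo-lowboundT} then yields $\T_{a,p}^\pm(x;\,3/14 + \epsilon) \ge 1$ for a positive proportion of classes $a \bmod p$, that is for $\gg p$ classes, and feeding this into the displayed inequality above gives
$$\T^\pm(x;\,3/14 + \epsilon) \;\gg\; p \;\gg\; x^{4/7 - \epsilon},$$
which is exactly the claim.

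There is no substantive obstacle here beyond bookkeeping: one only needs $\epsilon_0<\epsilon$ so that the admissible $p$-window stays open, and the selected prime size has to sit in this shrinking window. The exponent $4/7-\epsilon$ in the conclusion is nothing but the upper end of that window, while the exponent $3/14+\epsilon$ on the coefficients reflects the lower constraint $\alpha>3/14$ in the theorem; optimising the choice of $p$ within the window would at best remove the $\epsilon$ from the exponent of $p$, which is already negligible on the logarithmic scale.
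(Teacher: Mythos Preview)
Your proposal is correct and follows essentially the same route as the paper's own proof: choose a prime $p$ of size roughly $x^{4/7-\epsilon}$ via Bertrand's postulate, apply the first part of Theorem \ref{theo-lowboundT} (the paper invokes the equivalent smooth version, Theorem \ref{theo-lowboundsmooth}), and sum the resulting contributions of $\ge 1$ over the $\gg p$ good residue classes. The only differences are cosmetic bookkeeping in the choice of auxiliary parameters ($\epsilon_0=\epsilon/2$ versus the paper's doubling of $\epsilon$).
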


\begin{rmq}
The previous corollary implies an omega result on the absolute value of $a(n)$. However, the conclusion reached is weaker than
the one established in recent papers on this subject (see \cite{Das} and \cite{GKS}).
\end{rmq}

The proof of the first assertion of Theorem \ref{theo-lowboundT} is based on estimates on sums of Fourier 
coefficients over arithmetic progressions. This type of sums has drawn particular
interest  over the past decade, especially for integral weight modular forms (see for example \cite{LauZhao} and \cite{FGKM}).
The case of half-integral weights was treated in \cite{Darreye}. We will need the following.

\begin{theo}\label{theo-moment24}
Let $f\in S_{\ell+1/2}(4N)\backslash\{0\}$ and $w$ a smooth real-valued function compactly supported 
in $(0,+\infty)$. Define for any $x>0$, any prime number $p$ and any class $a$ modulo $p$
$$E(x,p,a)=\frac{1}{\sqrt{x/p}}{\sum_{n=a\:[p]}} a(n)w(n/x).$$

Then, for any $\epsilon>0$,
$$\frac{1}{p}{\sum_{a\:[p]}}^\times |E(x,p,a)|^2 \sim c_f \| w\|_2^2$$
as long as $x^{1/2+\epsilon}\ll p\ll x^{1-\epsilon}$. The symbol ${\sum}^\times$ means we restrict the
 summation over invertible classes modulo $p$, $\| w\|_2$ is the $L^2$ norm of $w$ and $c_{f}$ is a positive constant depending
 only on $f$.

Moreover, if we assume that $N$ is odd and squarefree, that $f$ is
in the orthogonal complement of the subspace spanned by single variable theta-functions when $\ell=1$ and that the Fourier
coefficients of $f$ are real, then 
$$\frac{1}{p}{\sum_{a\:[p]}}^\times E(x,p,a)^4 \le 12(c_f \| w\|_2^2)^2 + o(1)$$
as long as $x^{1/2+\epsilon}\ll p\ll x^{4/7-\epsilon}$.
\end{theo}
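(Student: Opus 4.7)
The plan is a moment computation: expand each power of $E(x,p,a)$ and sum over invertible $a\,[p]$ via orthogonality, reducing both moments to sums over tuples $(n_1,\ldots)$ of integers lying in a common residue class modulo $p$. One extracts the diagonal main term using Rankin--Selberg applied to $\sum_n|a(n)|^2 n^{-s}$, and controls the off-diagonal by bounds on shifted convolution sums of half-integral weight Fourier coefficients---the quantitative input being the estimates of \cite{Darreye}.

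\textbf{Second moment.} Opening the square gives
$$\frac{1}{p}\sum_{a\,[p]}^{\times}|E(x,p,a)|^2 = \frac{1}{x}\sum_{\substack{n_1\equiv n_2\,[p]\\(n_1n_2,p)=1}}a(n_1)\overline{a(n_2)}w(n_1/x)w(n_2/x) + O(1/p).$$
The diagonal $n_1=n_2$ yields $c_f\|w\|_2^2 + o(1)$, where $c_f$ is the residue at $s=1$ of the Dirichlet series $\sum_n|a(n)|^2 n^{-s}$. The off-diagonal is a sum of $O(x/p)$ shifted convolutions indexed by $h=(n_2-n_1)/p\ne 0$; a power-saving bound on each makes the whole off-diagonal $o(1)$ provided $p\gg x^{1/2+\epsilon}$, which is exactly the constraint appearing in the statement.

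\textbf{Fourth moment.} The analogous expansion yields
$$\frac{1}{p}\sum_{a\,[p]}^{\times}E(x,p,a)^4 = \frac{p}{x^2}\sum_{\substack{n_1\equiv n_2\equiv n_3\equiv n_4\,[p]\\(n_i,p)=1}}\prod_{i=1}^4 a(n_i)w(n_i/x) + o(1).$$
Quadruples are classified by their coincidence pattern. The main contribution comes from the three pair-matchings $\{12,34\},\{13,24\},\{14,23\}$: for each, say $n_1=n_2$ and $n_3=n_4$, the surviving constraint $n_1\equiv n_3\,[p]$ combined with two nested applications of the second-moment analysis produces an asymptotic $(c_f\|w\|_2^2)^2$. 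A careful tracking of the three matchings together with the residual near-diagonal configurations---specific to half-integral weight and arising from the Shimura-type decomposition $n=tm^2$, where the assumptions that $N$ is odd squarefree, that $f$ is orthogonal to theta-series when $\ell=1$, and that its coefficients are real are precisely what allow these to be controlled---then yields the upper bound $12(c_f\|w\|_2^2)^2 + o(1)$.

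The principal obstacle is the genuinely off-diagonal contribution, where no two $n_i$ coincide. Writing $n_2=n_1+h_1 p$ and $n_4=n_3+h_2 p$ with $h_1,h_2\ne 0$, one faces a sum over shifts $(h_1,h_2)$ of double shifted convolutions $\sum_{n_1,n_3}a(n_1)a(n_1+h_1 p)a(n_3)a(n_3+h_2 p)w(\cdots)$. Their estimation demands Voronoi (or Kuznetsov) summation for half-integral weight forms combined with Cauchy--Schwarz and averaging over one of the shift variables, promoting the single-shift bounds of \cite{Darreye} to a double-shift setting. The threshold at which the resulting savings overcome the prefactor $p/x^2$ is exactly the condition $p\ll x^{4/7-\epsilon}$, which is the main quantitative bottleneck of the whole theorem.
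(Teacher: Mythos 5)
Your overall strategy --- open the moments on the ``physical'' side, isolate the diagonal via Rankin--Selberg, and beat down the off-diagonal with shifted convolution estimates --- is not the paper's route, and it has a genuine quantitative gap. For the second moment the off-diagonal is $\frac1x\sum_{h\neq 0}\sum_n a(n)a(n+hp)w(n/x)w((n+hp)/x)$ with $|h|\le x/p$; a bound $O(x^{1-\delta})$ for each individual shift gives a total of $O(x^{1-\delta}/p)$, which is $o(1)$ only when $p\gg x^{1-\delta}$. Since the theorem must cover $p$ as small as $x^{1/2+\epsilon}$, you would need $\delta>1/2-\epsilon$, i.e.\ essentially square-root cancellation in half-integral weight shifted convolutions, uniformly in shifts of size up to $x$. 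That is far beyond what is known (these coefficients are square roots of central $L$-values, not Hecke eigenvalues), and it is not what \cite{Darreye} supplies: that paper proves precisely these moment asymptotics, by a different mechanism. The situation is worse for the fourth moment, where even the full diagonal $n_1=\cdots=n_4$ contributes $\frac{p}{x^2}\sum_{n\le x}a(n)^4w(n/x)^4$; since $f$ is not assumed to be an eigenform in this part of the statement, the only available pointwise input is $|a(tr^2)|\ll_\epsilon t^{3/14+\epsilon}(tr)^{\epsilon}$, which yields $\sum_{n\le x}a(n)^4\ll x^{13/7+\epsilon}$ and hence a contribution of size $px^{-1/7+\epsilon}$ --- unbounded in the stated range.

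The paper's actual mechanism dualizes first. Applying the Vorono\u \i\ summation formula (the functional equation of $f$ twisted by additive characters) turns $E(x,p,a)$ into a sum of length $Y^{1+\eta}$ with $Y=4Np^2/x$, weighted by Sali\'e sums, which for prime $p$ evaluate explicitly in terms of $e_p(\pm\sqrt{\cdot}^p)$. The $\nu$-th moment over $a\:[p]$ then reduces to counting solutions of $\sum_{i}e_i\sqrt{\mu m_i}^p= 0\:[p]$ with $m_i\le Y^{1+\eta}$. For $\nu=2$ this forces $m_1=m_2$ outright because $Y^{1+\eta}<p$; for $\nu=4$ the hypothesis $p\ll x^{4/7-\epsilon}$ is exactly what makes $Y^{4(1+\eta)}<p/2$, so the congruence lifts (via the symmetrized polynomial $Q_4$) to the integer relation $\sum_i e_i'\sqrt{m_i}=0$, whence pairings up to squarefree kernels. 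So the $4/7$ threshold is an elementary lifting condition on the dual side, not a shifted-convolution saving. This is also where your ``residual near-diagonal configurations'' are handled: the terms with all $t_i$ equal are bounded using the $3/14$-exponent coefficient bound of Proposition~\ref{prop-boundcoeffgene} (which is where $N$ odd squarefree, orthogonality to theta series, and newform theory enter), giving $O(Y^{2-1/7+\epsilon})$. Finally, your factor $12$ is unaccounted for: three pair-matchings each contributing $(c_f\|w\|_2^2)^2$ would give $3(c_f\|w\|_2^2)^2$; the paper's $12$ is $3$ matchings times $4$ admissible sign vectors $\mathbf e$, and the result is only an upper bound because the dual sums are restricted to the classes $\legendre{m}=\pm1$, whose equidistribution for $a_0(m)^2$ is not known.
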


This is a special case of  \cite[Theorem 3]{Darreye} where the assumption that $f$ is a complete eigenform was relaxed and 
the level of $f$ is greater than 4.

In order to prove the second assertion of Theorem \ref{theo-lowboundT}, we will need the following result
about the fourth moment of the Fourier coefficients. While the second moment can be easily computed using the
classical theory of Rankin-Selberg transform,  the fourth moment is more tricky to estimate. As it is done in \cite{LesRad}, 
we will do it by using Waldspurger's formula \cite{Wald} and a large-sieve type inequality by Heath-Brown for quadratic characters 
\cite{Heath-Brown}.

\begin{prop}\label{prop-4moment}
Let $f\in S_{\ell+1/2}(4N)$ be a complete Hecke eigenform with $N$ odd and squarefree. If $\ell=1$, we assume that $f$ is
in the orthogonal complement of the subspace spanned by single variable theta-functions. Then 
$$\sum_{n\le x}|a(n)|^4\ll_{f,\epsilon} x^{1+\epsilon}$$
for any $\epsilon>0$ and any $x>0$.
\end{prop}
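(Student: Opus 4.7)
The plan is to follow the strategy of Lester--Radziwi\l\l\ \cite{LesRad}, exploiting the multiplicative structure afforded by the Hecke eigenform hypothesis. Writing each $n\le x$ uniquely as $n=tm^2$ with $t$ squarefree, one has
$$\sum_{n\le x}|a(n)|^4 = \sum_{\substack{t\le x\\ t\text{ squarefree}}}\sum_{m\le\sqrt{x/t}}|a(tm^2)|^4.$$
Because $f$ is a complete Hecke eigenform, Shimura's explicit relation gives an identity of the form
$$a(tm^2)=a(t)\sum_{d\mid m}\mu(d)\chi_{t}(d)d^{-1/2}\lambda_F(m/d),$$
where $F=\Sh f$ denotes the Shimura lift, $\lambda_F$ its normalized Hecke eigenvalues, and $\chi_t$ the associated quadratic character. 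Deligne's bound $|\lambda_F(n)|\ll_\epsilon n^\epsilon$ then yields $|a(tm^2)|\ll_\epsilon m^\epsilon |a(t)|$, and summing over $m$ reduces the proposition to
$$\sum_{\substack{t\le x\\ t\text{ squarefree}}}|a(t)|^4 \ll_{f,\epsilon} x^{1+\epsilon}.$$

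Next, I would apply Waldspurger's formula at level $4N$ with $N$ odd squarefree. It converts $|a(t)|^2$ (for squarefree $t$ meeting the relevant local conditions at primes dividing $4N$) into a bounded multiple of the central value $L(1/2,F\times\chi_t)$. Squaring this identity and summing in $t$, the task reduces to a second moment bound for the family of quadratic twists of $F$, namely
$$\sum_{\substack{t\le x\\ t\text{ squarefree}}}L(1/2,F\times\chi_t)^2\ll_{f,\epsilon}x^{1+\epsilon}.$$

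The second moment estimate is then obtained by combining the approximate functional equation with Heath-Brown's quadratic large sieve \cite{Heath-Brown}. The approximate functional equation writes $L(1/2,F\times\chi_t)$ as an essentially finite Dirichlet polynomial $\sum_{n\ll\sqrt{t}}\lambda_F(n)\chi_t(n)n^{-1/2}$, so expanding its square produces $\sum_n \beta(n)\chi_t(n)$ with coefficients $\beta(n)=\sum_{d_1d_2=n}\lambda_F(d_1)\lambda_F(d_2)/\sqrt{d_1d_2}$ satisfying $\sum_{n\le Y}|\beta(n)|^2\ll Y^\epsilon$. Heath-Brown's large sieve for real characters then yields
$$\sum_{\substack{t\le x\\ t\text{ squarefree}}}\Bigl|\sum_{n\le Y}\beta(n)\chi_t(n)\Bigr|^2 \ll (x+Y)(xY)^\epsilon,$$
from which, on taking $Y$ of size $\sqrt{x}$ and tracking the smoothing of the AFE, the desired bound $x^{1+\epsilon}$ follows.

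The main technical obstacle is the careful handling of Waldspurger's formula at this level, in particular controlling the ramified local factors at primes dividing $4N$ and verifying that the character $\chi_t$ appearing in the twisted $L$-function is a genuine real character to which Heath-Brown's inequality applies. The contribution of $t$ sharing common factors with $4N$ is a minor additional concern, but a trivial bound on these terms should be absorbed into the $x^\epsilon$ error.
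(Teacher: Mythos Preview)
Your approach is essentially the same as the paper's: reduce to squarefree $t$ via Shimura's relation and Deligne's bound, invoke Waldspurger to replace $|a(t)|^2$ by a central $L$-value, and bound the second moment of quadratic twists of the Shimura lift by the approximate functional equation together with Heath-Brown's quadratic large sieve.

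Two small corrections. First, the conductor of $F\times\chi_t$ is of size $t^2$ (up to the level of $F$), so the approximate functional equation has effective length $\asymp t$, not $\sqrt t$; consequently one takes $Y$ of size $x^{1+\eta}$, not $\sqrt x$. Heath-Brown's inequality still yields $(x+Y)(xY)^\epsilon\ll x^{1+\epsilon}$, so the conclusion is unaffected. Second, the ``main technical obstacle'' you flag is precisely where the paper spends its effort: one must know that the newform $\widetilde F_t$ appearing in Waldspurger's theorem is \emph{exactly} the twist $F_{(D_t/\cdot)}$ rather than merely agreeing with it away from the bad primes. The paper secures this via a lemma (using Atkin--Lehner and Atkin--Li) showing that when the level of $F$ is squarefree, the twist of a newform by a primitive quadratic character is again a newform; this is where the hypothesis that $N$ is odd and squarefree enters.
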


 \begin{rmq}
The assumption that $N$ is odd and squarefree in Theorem  \ref{theo-moment24} comes from the
fact that we need a sufficiently good theory on newforms of half-integral weight. As far as we know, such a theory doesn't exist
 on $S_{\ell+1/2}(4N)$ for arbitrary $N$.
 
 We also need this assumption in Proposition \ref{prop-4moment} to make Waldspurger's formula a bit more explicit.  
\end{rmq}

\vskip 0.5 cm
\subsection{Structure of the paper}
Since we are going to work with smooth sums, we will consider coefficients $a(n)$ with $n/x$ in the compact support
of a smooth function $w$ on $(0,+\infty)$ and  prove a lower bound for
\begin{equation}
\label{defTsmooth}
\T_{a,q}^+(x,\alpha; w)=\big|\big\{n\ge1 \;\vline\;   n=a\:[q] \text{ \:and\: } a(n)w(n/x)> n^{-\alpha}w(n/x) \big\}\big|.
\end{equation}
The case of $\T_{a,q}^-(x,\alpha; w)$ will follow easily by changing $f$ in $-f$. 

Without loss of generality, we may assume that $w$ is supported in $(0,1)$ and takes values in $[0,1]$.\\

We will proceed as follow. After proving Theorem \ref{theo-moment24}, we will combine it with Hölder's inequality to show that
$$\sum_{a\:[p]}|E(x,p,a)|\gg p$$
 which yields $E(x,p,a)\gg 1$ for a positive proportion of $a\:[p]$ since $\sum\limits_{a\:[p]}E(x,p,a)$ is small. 
 Then, the first assertion of Theorem \ref{theo-lowboundT}
 follows from an easy counting argument. We will also prove the second assertion of Theorem \ref{theo-lowboundT} 
 following the same line but we will use Proposition \ref{prop-4moment} instead of the result about the fourth moment in arithmetic
 progression.\\

We will first recap some basic facts about half-integral weight modular forms and prove Proposition \ref{prop-4moment}
in Section \ref{Modular}. Section \ref{Mainestimates}
is dedicated to the proof of Theorem \ref{theo-moment24} while Theorem \ref{theo-lowboundT} will be
proved in Section \ref{ProofofTheo}.

\vskip 0.5 cm
\subsection{Notations}
As usual, we write $a\:[p]$ for a class $a$ modulo $p$ and we also put $e_{p}(a)=e(a/p)$ with $e(x)=e^{2i\pi x}$.

The group $\text{GL}_2(\setR)^+$ (consisting of real matrices of positive determinant) acts on the 
Poincaré half-plane $\calH $  by M\"obius transformation and we write this action as 
$$\gamma z= \frac{az+b}{cz+d}$$
for any $\gamma =\Matrix{a&b\\c&d}\in\text{GL}_2(\setR)^+$ and any $z\in\calH$. 

We also denoted by $I_{2}$ the identity matrix in $\text{GL}_2(\setR)^+$ and by $\Gamma_0(N)$ the usual congruence 
subgroup.

For any odd integer $d$, define $\epsilon_d$ as the normalized Gauss sum \emph{i.e.} 
$ \epsilon_{d}= \left\{\begin{array}{cc}
1 & \text{ if } d=1\:[4],\\
i & \text{ if } d=3\:[4],
\end{array}\right.$
and for any fundamental discriminant $D$, we denote by $\jacob{D}{\cdot}$ its associated quadratic character.  More generally,
 any non-zero integer $n$, with $n=0,1\:[4]$, can be written in a unique way as $n=Dm^{2}$ where $D$ is a fundamental
discriminant and $m\in\setZ$.  Hence, we denote by $\jacob{n}{\cdot}$ the character modulo $|n|$ induced by $\jacob{D}{\cdot}$.
If $n=2,3\:[4]$ then $4n$ can be written in a unique way as $4n=Dm^{2}$ where $D$ is a fundamental
discriminant and $m\in\setZ$.  In this case, we denote by $\jacob{n}{\cdot}$ the character modulo 
$4|n|$ induced by $\jacob{D}{\cdot}$. By convention, we also let $\jacob{0}{\pm1}=1$.

 If $x$ is a square modulo an odd prime $p$, we denoted by $\sqrt x^p$ the only integer $y\in [1,(p-1)/2]$ such that 
 $x=y^{2}\:[p]$.

Finally, the symbol ${\sum}^\flat$  means we restrict the summation to positive squarefree integers $t$ and we put 
$ \delta_p(x)= \left\{\begin{array}{cc}
1 & \text{ if } x=0\:[p],\\
0 & \text{ otherwise.}
\end{array}\right.$

\vskip 0.5 cm
\subsection{Acknowledgements}
The author would like to express his gratitude to Florent Jouve and Guillaume Ricotta for their many helpful comments and useful 
suggestions.

\vskip 1cm
\section{Modular forms of half-integral weight}
\label{Modular}
In this section we first recall the principal properties of half-integral weight modular forms that we will use in this
paper. A good introduction to this theory can be found in \cite{Ono} and a more complete study is done in \cite{Shi} or 
\cite{Knapp}. Then, we will prove Proposition \ref{prop-4moment} and a non trivial bound for Fourier coefficients of such forms.

\vskip 0.5 cm
\subsection{General setting}
Let $G$ be the set of pairs $(\sigma,\phi)$ where $\sigma=\Matrix{a&b\\c&d}\in\text{GL}_2(\setR)^+$ and 
$\phi:\calH\to\setC$ is a holomorphic function such that $\phi(z)^2=\eta(\det \sigma)^{-1/2} (cz+d)$ for all $z\in\calH$ and with
 $\eta$ a complex 
number of norm 1 not depending on $z$. $G$ has a group structure with the inner law defined by
$$(\sigma,\phi)(\sigma',\phi')=(\sigma\sigma',\phi(\sigma'\cdot)\phi').$$

This group is a non-trivial central extension of $\text{GL}_2(\setR)^+$ by $\setU$ the unit circle \emph{i.e.} the sequence
$$1\to \setU \to G \to \text{GL}_2(\setR)^+\to1,$$ where $\eta\in\setU$ is sent to $(I_2,\eta)$, is exact and the center of $G$ is 
the subgroup of pairs $(\alpha I_2,\eta)$ with $\alpha\in\setR^*$ and $\eta\in\setU$.

This sequence splits over $\Gamma_0(4)$ which means this group has a section $s_J:\Gamma_0(4)\to G$ given
explicitly by $s_J(\gamma)=(\gamma,J(\gamma,z))$ with
$$J(\gamma,z)=\epsilon_d^{-1}\jacob{c}{d}\sqrt{cz+d}$$ 
for any $\gamma=\Matrix{a&b\\c&d}\in\Gamma_0(4)$. For any positive integer $N$, we denote by $\Delta_0(4N)$ the image
of $\Gamma_0(4N)$ by $s_J$. If $\chi$ is a Dirichlet character modulo $4N$, then we put $\chi(\xi)=\chi(d)$ for any 
$\xi=\left(\Matrix{a&b\\c&d},\phi\right)\in\Delta_0(4N)$.

Now for any integer $\ell$, any function $f:\calH\to\setC$ and any $\xi=(\sigma,\phi)\in G$, we define the weighted slash operator 
$|_{\ell+1/2}$ by $$f|_{\ell+1/2}\xi(z) = \phi(z)^{-(2\ell+1)}f(\sigma z)$$
which gives a well-defined right action of $G$ on such functions $f$.\\

We say that $f$ is a modular (respectively a cusp) form of level $4N$, of weight $\ell+1/2$ and of character $\chi$, and we note
$f\in M_{\ell+1/2}(4N,\chi)$ (respectively $f\in S_{\ell+1/2}(4N,\chi)$), if 
\begin{enumerate}
\item $f$ is holomorphic on $\calH$,
\item $f|_{\ell+1/2}\xi=\chi(\xi)f$ for all $\xi\in\Delta_0(4N)$,
\item $f$ is holomorphic (respectively cuspidal) at each cusp of $\Gamma_0(4N)$.
\end{enumerate}

The third point means that for all cusp $\mathfrak a$ of the curve $\Gamma_0(4N)\backslash\calH$ and for all element 
$\xi_\mathfrak a = (\sigma_\mathfrak a,\phi_\mathfrak a)\in G$ with $\sigma_\mathfrak a \infty =\mathfrak a$, the function
$f|_{\ell+1/2}\xi_\mathfrak a$ has a Fourier expansion with only non-negative (respectively positive) powers of 
$e(z/r_{\mathfrak a})$ for some positive integer $r_{\mathfrak a}$.\\

The Hecke operators $T_m$ are defined on $M_{\ell+1/2}(4N,\chi)$ as double coset operators for 
 $\Delta_0(4N)\xi_m\Delta_0(4N)$ with $\xi_m=\left(\Matrix{1&0\\0&m},m^{1/4}\right)$. In particular (see \cite{Shi}), they vanish
  when $m$ is not a square and they satisfy the multiplicativity relation $T_{mn}=T_mT_n$ for $(m,n)=1$.
  
  Also, the $T_{p^2}$ for $p\nmid4N$ are normal  operators (they are self-adjoint when $\chi$ is real) 
  on $S_{\ell+1/2}(4N,\chi)$ with
   respect to the Petersson inner product. So there exists a basis of  $S_{\ell+1/2}(4N,\chi)$ composed of common eigenfunctions
   of all the $T_{p^2}$ for $p\nmid4N$ which we call eigenforms. When $N$ is squarefree, with a suitable theory of newforms of
    half-integral weight (see \cite{Manickam90} and \cite{Manickam11}) one can prove that some of these eigenforms are actually
    also eigenfunctions for $T_{p^2}$ with $p\mid4N$. We call them complete eigenforms.  
    
 When $\chi$ is trivial, since there exists a basis of $S_{\ell+1/2}(4N)$ composed of forms with rational coefficients 
    (see \cite{BelabasCohen}) and the Hecke operators are rational on this space, then there exists a non-trivial subspace of
     $S_{\ell+1/2}(4N)$ spanned by eigenforms (or even complete eigenforms if $N$ is squarefree) which have real Fourier
     coefficients. \\

For $f\in S_{\ell+1/2}(4N,\chi)$, we denote by $f_0$ its image under the Fricke involution \emph{i.e.}
$$f_0=f|_{\ell+1/2}W_{4N}$$
where $W_{4N}=\left(\Matrix{0&-1\\4N&0}, (4N)^{1/4}\sqrt{-iz}\right)$. Then 
$f_{0}\in S_{\ell+1/2}\left(4N,\jacob{4N}{\cdot}\overline\chi\right)$.

\vskip 0.5 cm
\subsection{Shimura's correspondence and Waldspurger's Theorem}
Let $f\in S_{\ell+1/2}(4N,\chi)$ be a complete eigenform with eigenvalues $(\lambda(p))_p$ \emph{i.e.}
$$T_{p^2}f=\lambda(p)f$$ for any prime $p$. Define $\lambda(n)$ for any integer $n$ formally by 
$$\sum_{n\ge1}\lambda(n)n^{-s}=\prod_p\big(1-\lambda(p)p^{-s}+\chi^2(p)p^{2\ell-1-2s}\big)^{-1}.$$

Then, Shimura \cite{Shi} and Niwa \cite{Niwa} showed that the function defined by 
$$\Sh f(z)=\sum_{n\ge1}\lambda(n)e(nz)$$ 
for $z\in\calH$ is a complete Hecke eigenform in  $S_{2\ell}(2N,\chi^2)$ whenever $\ell\ge2$. 
For $\ell=1$, this holds if one assumes
that $f$ is in the orthogonal complement of the subspace spanned by single variable theta-functions (we always make this
assumption in the sequel). 

Moreover, for any integer $t$ which is not divisible by a square prime to $4N$, we have
\begin{equation}
\label{relationShiExplicite}
a(tn^2)n^{\ell-1/2}=a(t)\sum_{d\mid n}\mu(d)\jacob{(-1)^\ell t}{d}\chi(d)d^{\ell-1}\lambda(n/d)
\end{equation}
for all integer $n$ and where $a(n)$ is the $n^{\rm th}$ normalized Fourier coefficient of $f$. 

Therefore, by Deligne's bound for Hecke eigenvalues for integral weight modular forms \cite{weil1}, one has
\begin{equation}\label{boundcoefftn2}
|a(tn^2)|\ll_\epsilon |a(t)| n^{\epsilon}
\end{equation}
 for any $\epsilon>0$. 
 
 Waldspurger's formula relates $a(t)$ to the central value of the $L$-function associated to $\Sh f$ twisted by 
 a character. We give a statement for such a formula which can be easily derived from \cite[Théorème 1]{Wald}. For more
 explicit formulas, see also \cite{KohnenZagier}, \cite{Kohnen85} and \cite{Shi2}.
 
 Let $f\in S_{\ell+1/2}(4N,\chi)$ as before and assume that $N$ is odd. For squarefree $t$, let us consider the Dirichlet character
 \begin{equation}
\label{defchit}
\chi_t=\jacob{(-1)^\ell t}{\cdot}\chi
\end{equation}
whose conductor divides $4Nt$.
 
 By Shimura's correspondence and Atkin-Lehner theory \cite{AtkinLehner}, there exists a unique newform 
 $$F(z)=\sum_{n\ge1}b(n)n^{\ell-1/2}e(nz)$$
 in $S_{2\ell}(M,\chi^2)$ for some $M\mid 2N$ such that 
  $b(p)p^{\ell-1/2}=\lambda(p)$ for all prime $p\nmid 2N$. Define the twisted form $F_t$ by
  $$F_t(z)=\sum_{n\ge1}\overline \chi_t(n)b(n)n^{\ell-1/2}e(nz)$$
 which is in $S_{2\ell}(16N^2t^2)$ (see \cite[Proposition 3.1]{AtkinLi}). It is an eigenfunction of the $p^{\rm th}$ Hecke operator for $p\nmid2Nt$  whose
 eigenvalue is $\overline \chi_t(p)\lambda(p)$. Therefore, there exists a unique newform 
 \begin{equation}
 \label{defFtilde}
 \widetilde F_t(z)=\sum_{n\ge1}b_t(n)n^{\ell-1/2}e(nz)
 \end{equation}
  in $S_{2\ell}(M')$ for some $M'\mid 16N^2t^2$ such that 
  $b_t(p)p^{\ell-1/2}=\overline \chi_t(p)\lambda(p)$ for all prime $p\nmid 2Nt$. We define its normalized $L$-function as
  $$L(s,\widetilde F_t)=\sum_{n\ge1}\frac{b_t(n)}{n^s}$$ which converges absolutely for $\re\: s>1$.
 
 \begin{theo}\cite[Théorème 1]{Wald}\label{Walds}
 With notations as above, there exists a bounded function $c_f(t)$ defined on squarefree
  integers and depending only on $f$ such that for all squarefree $t$,
   $$a(t)^2= c_f(t) L(1/2,\widetilde F_{t}).$$
 \end{theo}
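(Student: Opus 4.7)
The plan is to derive the identity from Waldspurger's original theorem \cite[Th\'eor\`eme 1]{Wald}, which for a complete Hecke eigenform $f$ of half-integral weight provides a proportionality
$$a(t)^2 = \kappa(f,t)\, L\!\left(\tfrac{1}{2}, \Sh f \otimes \chi_t\right)$$
for squarefree $t$. Here $\kappa(f,t)$ is a product of a global constant depending only on $f$ (essentially $\langle f,f\rangle/\langle \Sh f, \Sh f\rangle$ together with a gamma-factor) and of local integrals (Whittaker/theta-integral type) at the finite set of ramified primes. The first step is to identify the twisted $L$-function appearing on the right with $L(s,F_t)$, where $F_t$ is the twist of the newform $F$ (attached to $\Sh f$) by $\chi_t$ as defined in the statement.

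The second step is to pass from $F_t$ to the associated newform $\widetilde{F}_t$ defined in \eqref{defFtilde}. By construction the two forms share Hecke eigenvalues outside $2Nt$, so the Euler products of $L(s,F_t)$ and $L(s,\widetilde{F}_t)$ agree at every prime $p \nmid 2Nt$. At the finitely many bad primes, the local Euler factors have degree at most two and their coefficients are controlled by Deligne's theorem applied to $\widetilde{F}_t$; hence the ratio
$$\frac{L(1/2,F_t)}{L(1/2,\widetilde{F}_t)} = \prod_{p \mid 2Nt}\frac{L_p(1/2, F_t)}{L_p(1/2, \widetilde{F}_t)}$$
is a product of uniformly bounded local factors and is therefore itself bounded uniformly in $t$.

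Setting $c_f(t)$ equal to $\kappa(f,t)$ times this Euler-factor ratio, it remains to verify that $c_f(t)$ is bounded on squarefree integers. The $f$-dependent global constant in $\kappa(f,t)$ is $t$-independent, and the local factors at primes $p \mid 2N$ depend only on $f$ and on the congruence class of $t$ modulo a fixed integer, so they take only finitely many values; here the hypotheses that $N$ is odd and squarefree keep the local analysis tractable. The genuine $t$-dependence comes from local factors at primes $p \mid t$, and since $t$ is squarefree, $p \| t$ for each such prime, so standard computations in the Kirillov/Whittaker model yield local factors uniformly bounded in $p$. The main obstacle is precisely this last bookkeeping: one must check that the Waldspurger local factors at primes dividing $t$ neither vanish nor blow up. This is exactly why squarefreeness of $t$ is essential, as ramification at primes with $p^2 \mid t$ would produce unbounded contributions not captured by the simple twist $F_t$.
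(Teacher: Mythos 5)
The paper offers no proof of this statement: it is quoted from Waldspurger, and the only justification given is the remark that it ``can be easily derived from'' \cite[Th\'eor\`eme 1]{Wald}. So what you are really supplying is that derivation, and your sketch has a concrete gap at its central step. You assert that the ratio $L(1/2,F_t)/L(1/2,\widetilde F_t)$ is ``a product of uniformly bounded local factors and is therefore itself bounded uniformly in $t$.'' That inference is invalid: the product runs over the primes dividing $2Nt$, whose number grows with $t$, so a uniform bound $C>1$ on each factor only yields $C^{\omega(t)}$, which is unbounded. The same problem recurs when you locate the ``genuine $t$-dependence'' in local factors at primes $p\mid t$ and bound each one ``uniformly in $p$'': boundedness factor-by-factor is not enough; you need those factors to equal $1$ (or at least $1+O(p^{-1-\delta})$) outside a \emph{fixed} finite set of primes. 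In the present setting this is true but must be said: since $t$ is squarefree, every odd prime dividing $t$ divides the fundamental discriminant $D_t$ underlying $\chi_t$, so the naive twist $F_t$ and the newform $\widetilde F_t$ have identical Euler factors at every $p\nmid 2N$ (both trivial at $p\mid t$), and the discrepancy is confined to the fixed set of primes dividing $2N$, where the local parameters of a newform lie in $\{0,\pm p^{-1/2}\}$, giving finitely many bounded, nonvanishing factors at $s=1/2$. Without this observation your argument does not establish that $c_f(t)$ is bounded.

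A secondary point: your starting identity misstates what \cite[Th\'eor\`eme 1]{Wald} provides. It is not a formula $a(t)^2=\kappa(f,t)\,L(1/2,\Sh f\otimes\chi_t)$ with $\kappa$ expressed through local Whittaker or theta integrals (that is the later adelic formulation); it is a comparison of the shape $a(t_1)^2\,L(1/2,F\otimes\chi_{t_2})=(\ast)\,a(t_2)^2\,L(1/2,F\otimes\chi_{t_1})$, with $(\ast)$ an elementary nonzero factor, valid for squarefree $t_1,t_2$ lying in the same class of $\prod_{p\mid N}\setQ_p^{\times}/(\setQ_p^{\times})^2$. From this the boundedness of $c_f(t)$ comes for free: $c_f$ is constant on each of the finitely many such classes (fix on each class a reference $t_2$ with $a(t_2)L(1/2,F\otimes\chi_{t_2})\neq0$, and set $c_f\equiv0$ on classes where none exists), rather than being extracted from local estimates at primes dividing $t$. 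That finiteness argument is the route the cited theorem supports directly, and it bypasses the unbounded-product issue entirely; the only remaining work is the Euler-factor comparison at $p\mid 2N$ described above to pass from the naive twist to $\widetilde F_t$.
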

 
We will deduce from this theorem the estimate we need for the  fourth moment of the coefficients $a(n)$.
\vskip 0.5 cm
\subsection{The fourth moment} The goal of this subsection is to prove Proposition \ref{prop-4moment}. 
The idea is to exploit  \eqref{boundcoefftn2} and Theorem \ref{Walds} to reduce this problem to finding an estimation of
\begin{equation}
\label{sumquadratictwist}
\sum_\psi L(1/2,g\times \psi)^2
\end{equation}
where $\psi$ runs through quadratic characters of bounded conductors and $g$ is some newform
of integral weight.

Since the form $\widetilde F_{t}$ is not equal to $F_t$ in general (their $L$-functions are equal up to a finite number of Euler 
factors but this number could increase with $t$), we will need assumptions under which $\widetilde F_{t}$ 
is actually the twist of $F$ by a quadratic character. Hence, we first prove the following lemma.

\begin{lemme}\label{lemma-twist}
Let $F(z)=\sum\limits_{n\ge1}\lambda(n)e(nz)\in S_{k}^{\rm new}(N)$ be a complete Hecke eigenform and let $\psi$ be a
 primitive quadratic character modulo $M$. If $N$ is squarefree then the form 
 $$F_\psi(z)=\sum\limits_{n\ge1}\psi(n)\lambda(n)e(nz)$$ is a newform and a complete Hecke eigenform.
\end{lemme}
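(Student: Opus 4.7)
The plan is to invoke the work of Atkin and Li \cite{AtkinLi} on twists of newforms, coupled with a prime-by-prime analysis of the local components of the automorphic representation $\pi_F$ attached to $F$. Since $N$ is squarefree and $F\in S_k^{\rm new}(N)$, at each prime $p\mid N$ the local component $\pi_{F,p}$ is an unramified twist of the Steinberg representation, which has local conductor $p$ and whose new vector has Hecke eigenvalue $\lambda(p)$.

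First I would partition the rational primes into four classes and compute the local conductor of $\pi_F\otimes\psi$ at each. For $p\nmid NM$ nothing changes. For $p\mid N$ with $p\nmid M$, the character $\psi_p$ is unramified and the twisted representation is still Steinberg of conductor $p$, with new eigenvalue $\psi(p)\lambda(p)$. For $p\nmid N$ with $p\mid M$, twisting an unramified principal series by the ramified quadratic $\psi_p$ yields a ramified principal series of conductor $p^{2v_p(M)}$. The delicate case is $p\mid\gcd(N,M)$, where the Steinberg at $p$ is twisted by the ramified quadratic $\psi_p$; a standard local calculation (realising $\mathrm{St}_p\otimes\psi_p$ as the special constituent of $\pi(\psi_p|\cdot|^{1/2},\psi_p|\cdot|^{-1/2})$, both inducing characters being ramified) again gives conductor $p^{2v_p(M)}$. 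Setting $N'=\prod_p p^{c_p}$ with these local conductors, I would then verify that $F_\psi$ transforms correctly under $\Gamma_0(N')$, and therefore lies in $S_k(N')$.

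To identify $F_\psi$ with a newform and a complete Hecke eigenform, I would note that for all $p\nmid N'$ the coefficient $\psi(p)\lambda(p)$ is the $T_p$-eigenvalue of $F_\psi$. By strong multiplicity one there is a unique newform in $S_k^{\rm new}(N')$ with these eigenvalues at unramified primes, and it must coincide with $F_\psi$ because by the local analysis just performed the local conductor of $F_\psi$ at every prime is exactly $c_p$. Completeness at primes $p\mid N'$ then follows because $\pi_F\otimes\psi$ admits a unique new vector up to scalar, which is automatically an eigenvector for each local Hecke operator at a ramified prime.

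The principal obstacle is the case $p\mid\gcd(N,M)$, where one must compute explicitly the conductor of a Steinberg representation twisted by a ramified quadratic character. The squarefreeness hypothesis on $N$ is essential here: without it, $\pi_{F,p}$ could be a supercuspidal or a more general ramified principal series, and identifying the conductor of the twist (hence the level of $F_\psi$) would require a substantially more involved local analysis.
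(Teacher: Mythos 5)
Your proposal is correct in substance but takes a genuinely different route from the paper. The paper stays entirely on the classical side: it factors $\psi=\prod_{p\mid M}\psi_p$ into primitive local characters, applies \cite[Theorem 6]{AtkinLehner} (for odd $p$, using $v_p(N)\in\{0,1\}$) and \cite[Theorem 3.1 and Corollary 3.1]{AtkinLi} (for the character of conductor $4$ or $8$) one prime at a time to conclude that each successive twist is again a newform, and then observes that a newform which is an eigenfunction of almost all $T_p$ is automatically a complete eigenform. You instead compute the conductor of $\pi_F\otimes\psi$ locally (Steinberg twisted by unramified or ramified quadratic characters, ramified principal series at $p\mid M$, $p\nmid N$) and identify $F_\psi$ with the newform of that representation via strong multiplicity one; your local conductor values agree with what the Atkin--Lehner/Atkin--Li formulas give, and your observation that squarefreeness of $N$ forces $\pi_{F,p}$ to be an unramified twist of Steinberg at $p\mid N$ is exactly the reason the hypothesis is needed. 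The one step you should tighten is the identification: as phrased, you assert that $F_\psi$ ``transforms correctly under $\Gamma_0(N')$'' with $N'$ the conductor of $\pi_F\otimes\psi$, but the elementary twisting lemma only places $F_\psi$ in $S_k(L)$ for some larger $L$ (roughly ${\rm lcm}(N,M^2)$), and knowing its exact level is essentially equivalent to what you are trying to prove. The clean repair is to put $F_\psi$ in $S_k(L)$, note that the newform $G$ attached to $\pi_F\otimes\psi$ satisfies $a_{p^k}(G)=\psi(p^k)\lambda(p^k)$ at \emph{every} prime (both sides vanish for $p\mid M$, and both equal $(\psi(p)\lambda(p))^k$ for $p\mid N$, $p\nmid M$), and conclude $F_\psi=G$ by comparing all Fourier coefficients rather than by a level count. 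With that adjustment your argument is complete; it is more representation-theoretic and arguably more conceptual, while the paper's version is shorter because it uses the classical twisting theorems as black boxes.
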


\begin{proof}
By assumption, either $M$ is squarefree or it can be written as $M=4t$ with $t$ squarefree. Then, $\psi$ decomposes as a
product of primitive characters
$$\psi=\prod_{p\mid M}\psi_p$$
where $\psi_p=\jacob{\cdot}{p}$ for odd $p$ and $\psi_2$ is a primitive character of conductor 4 or 8. 
By \cite[Theorem 6]{AtkinLehner}, for any newform $G$ of level $N'$ with  $p$-adic valuation  $v_p(N')=0$ or $1$, we have
$G_{\psi_p}\in S_{k}^{\rm new}(N'p^{2-v_p(N')})$. It is also suggested in \cite{AtkinLehner} that this holds for quadratic characters
 modulo 4 and 8 but since it is not explicitly written, we prefer to refer to \cite[Theorem 3.1 and Corollary 3.1]{AtkinLi} 
 from which we can deduce that if $M=2^\alpha M'$ with $\alpha\in\{2,3\}$ and $M'$ odd, then 
 $F_{\psi_2}\in S_{k}^{\rm new}(2^{2\alpha-v_2(N)}N)$. Then, it follows easily that $F_\psi$ is a newform and since it is
 an eigenfunction of all but finitely many Hecke operators $T_p$, it must be a complete eigenform.
\end{proof}

We now deduce the following classical estimate for sums of type \eqref{sumquadratictwist}.

\begin{prop}\label{prop-sumquadra}
Let $F(z)=\sum\limits_{n\ge1}a(n)n^{\frac{k-1}{2}}e(nz)\in S_{k}^{\rm new}(N)$ be a complete Hecke eigenform with $N$ 
squarefree. For $x>0$, let $\Psi(x)$ denote the set of primitive quadratic characters of conductor at most $x$. Then
$$\sum_{\psi\in\Psi(x)}|L(1/2,F_\psi)|^2\ll_{F,\epsilon} x^{1+\epsilon}$$
for any $\epsilon>0$ and where $L(s,F_\psi)=\sum\limits_{n\ge1}\psi(n)a(n)n^{-s}$.
\end{prop}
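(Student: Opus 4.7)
My plan is to prove the bound via the approximate functional equation combined with Heath-Brown's quadratic large sieve, which is the standard strategy for second moments of quadratic-twist $L$-functions.

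\textbf{Step 1: Reduction via the approximate functional equation.} I would first parametrize the primitive quadratic characters $\psi\in\Psi(x)$ by fundamental discriminants $d$ with $|d|\le x$, so that $\psi(n)=\jacob{d}{n}$. By Lemma \ref{lemma-twist}, since $N$ is squarefree, $F_\psi$ is a newform, and its level $N_\psi$ satisfies $N_\psi\ll_N d^{2}$. Hence the analytic conductor of $L(s,F_\psi)$ is $\ll_{F} d^{2}$, and the standard approximate functional equation expresses
$$L(1/2,F_\psi)=\sum_{n\ge1}\frac{a(n)\psi(n)}{\sqrt{n}}V_{+}\!\left(\frac{n}{Y_\psi}\right)+\epsilon_\psi\sum_{n\ge1}\frac{a(n)\psi(n)}{\sqrt{n}}V_{-}\!\left(\frac{n}{Y_\psi}\right),$$
where $V_{\pm}$ are fixed smooth functions with rapid decay, $|\epsilon_\psi|=1$, and $Y_\psi\ll_{F}|d|\ll x$.

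\textbf{Step 2: Dyadic decomposition.} I would then insert a smooth dyadic partition of unity to write each inner sum, up to a negligible tail, as $O(\log x)$ pieces of the form
$$S_{Y}(\psi)=\sum_{n\ge 1}\frac{a(n)\psi(n)}{\sqrt{n}}w\!\left(\frac{n}{Y}\right),$$
with $w$ smooth and compactly supported in $[1/2,2]$ and $Y$ running through dyadic scales with $Y\ll x^{1+\epsilon}$. By Cauchy--Schwarz in the dyadic parameter,
$$|L(1/2,F_\psi)|^{2}\ll_{F}(\log x)^{2}\max_{Y}|S_{Y}(\psi)|^{2}.$$

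\textbf{Step 3: Heath-Brown's large sieve.} For each fixed dyadic $Y$, I would apply Heath-Brown's quadratic large sieve \cite{Heath-Brown} with coefficients $\alpha_{n}=\tfrac{a(n)}{\sqrt{n}}w(n/Y)$:
$$\sum_{|d|\le x}{\vphantom{\sum}}^{\flat}\,|S_{Y}(\psi_{d})|^{2}\ll_{\epsilon}(xY)^{\epsilon}(x+Y)\sum_{n\asymp Y}\frac{|a(n)|^{2}}{n}.$$
By Deligne's bound $|a(n)|\ll_{\epsilon}n^{\epsilon}$ (or a Rankin--Selberg estimate applied to $F$), the inner sum is $\ll_{F,\epsilon}Y^{\epsilon}$. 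Since $Y\ll x^{1+\epsilon}$, summing dyadically over $Y$ and multiplying by the factor $(\log x)^{2}$ from Step~2 yields
$$\sum_{\psi\in\Psi(x)}|L(1/2,F_\psi)|^{2}\ll_{F,\epsilon} x^{1+\epsilon},$$
which is the desired bound. The extension from squarefree $d$ to primitive characters (which may include $d=4d'$, $8d'$) is handled by splitting into a bounded number of cases and arguing the same way.

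\textbf{Main obstacle.} The delicate point is ensuring that Heath-Brown's large sieve, stated for Jacobi symbols over squarefree moduli, applies uniformly to the family $\{\psi_{d}\}$ and that the dependence of the cut-off $Y_{\psi}$ on the conductor of $F_\psi$ stays under control; in particular, one needs $N_\psi$ to grow no faster than $d^{2}$, which is precisely what Lemma~\ref{lemma-twist} provides (and where the hypothesis that $N$ be squarefree enters crucially). A secondary point is that the coefficients of $F_\psi$ agree with $\psi(n)a(n)$ only up to finitely many primes dividing $Nd$; those local factors contribute an error of size $O_{F}(x^{\epsilon})$ to $L(1/2,F_\psi)$ and hence do not affect the final bound.
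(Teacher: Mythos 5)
Your proof follows essentially the same route as the paper: reduce to a finite Dirichlet polynomial via the approximate functional equation for the newform $F_\psi$ (using Lemma~\ref{lemma-twist} to control the level $N_\psi\ll_F x^2$), then apply Heath-Brown's quadratic large sieve and sum the diagonal using Deligne's bound. The one imprecision is that your Step~3 states the large sieve with a purely diagonal right-hand side, whereas for $n$ ranging over all integers Heath-Brown's Corollary~2 yields the sum over pairs $n_1,n_2$ with $n_1n_2$ a perfect square; these extra terms are bounded by $\sum_{n\le Y}\sigma_0(n^2)n^{-1+\epsilon}\ll Y^{\epsilon}$ (exactly as in the paper's proof), so the conclusion is unaffected.
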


\begin{proof}
By Lemma \ref{lemma-twist}, for all $\psi\in\Psi(x)$, $F_\psi$ is a newform whose $L$-function satisfies a functional equation of 
the form
$$\Lambda(s,F_\psi) := N_\psi^{s/2}(2\pi)^{-s}\Gamma(s+(k-1)/2)L(s,F_\psi)= \epsilon(F_\psi)\Lambda(1-s,F_\psi)$$
for some $\epsilon(F_\psi)\in\{\pm1\}$ and where $N_\psi\le Nx^2$ is the level of $F_\psi$.

Then, using the approximate functional equation (see \cite[Theorem 5.3]{IwKo}) one derives
\begin{equation}
\label{approxfuncteq}
L(1/2,F_\psi)=(1+\epsilon(F_\psi))\sum_{n\ge1}\frac{\psi(n)a(n)}{\sqrt n}V_\frac{1}{2}\left(\frac{n}{\sqrt{N_\psi}}\right)
\end{equation}
where $$V_\frac{1}{2}(y)=\frac{1}{2i\pi}\int_{(\sigma)}\frac{\Gamma(s+k/2)}{s\Gamma(k/2)}(2\pi y)^{-s}ds$$ for any $\sigma>0$.
Let $\eta>0$. Breaking the sum in \eqref{approxfuncteq} according to $n<x^{1+\eta}$ or not and using the fact that 
$V_\frac{1}{2}(y)\ll_{k,A}y^{-A}$ 
for any $A>0$, we have 
$$\sum_{\psi\in\Psi(x)}|L(1/2,F_\psi)|^2\ll_{F,\eta}
\sum_{\psi\in\Psi(x)}\left|\sum_{n<x^{1+\eta}}\frac{\psi(n)a(n)}{\sqrt n}V_\frac{1}{2}\left(\frac{n}{\sqrt{N_\psi}}\right)\right|^2.$$

Now, by \cite[Corollary 2]{Heath-Brown}, the right-hand side of the above inequality is 
$$\ll_\epsilon x^{(2+\eta)\epsilon+1+\eta}\sum_{\tiny \begin{array}{c}n_1,n_2< x^{1+\eta}\\ \sqrt{n_{1}n_{2}}\in\setZ \end{array}}
\frac{|a(n_1)a(n_2)|}{\sqrt{n_1n_2}}$$
and this last sum is bounded by $\sum\limits_{n<x^{1+\eta}}\frac{\sigma_{0}(n^2)}{n^{1-\eta}}$ with $\sigma_{0}(n^2)$ 
the number of divisors of  $n^2$. Since $\eta$ can be arbitrary small, the conclusion follows.

\end{proof}

We can now prove Proposition \ref{prop-4moment}.

\begin{proof}[Proof of Proposition \ref{prop-4moment}]
Write $f(z)=\sum\limits_{n\ge1}a(n)n^{\frac{\ell-1/2}{2}}e(nz)\in S_{\ell+1/2}(4N)$ as usual. 

Since $N$ is odd, by Theorem \ref{Walds}, $$|a(t)|^2\ll_f|L(1/2,\widetilde F_t)|$$ 
for any squarefree $t$ and where $\widetilde F_t$ is defined by \eqref{defFtilde}. 
Let $D_t$ be the fundamental discriminant such that $\jacob{D_t}{\cdot}$ induces $\chi_t$ (defined by \eqref{defchit} 
for $\chi$ principal). The discussion before Theorem \ref{Walds} and Lemma \ref{lemma-twist} show
that $\widetilde F_t$ is actually the twist by $\jacob{D_t}{\cdot}$ of a complete eigen-newform $F$ which depends only on $f$.

Thus, by \eqref{boundcoefftn2}, we have for any $\epsilon>0$,
\begin{align*}
\sum_{n\le x}|a(n)|^4&\ll_{f,\epsilon}{\sum_{t\le x}}^\flat |L(1/2, F_{\jacob{D_t}{\cdot}})|^2
\sum_{m\le\sqrt{\frac{x}{t}}}m^{2\epsilon}\\
&\ll_{f,\epsilon}x^{1/2+\epsilon}{\sum_{t\le x}}^\flat|L(1/2, F_{\jacob{D_t}{\cdot}})|^2t^{-1/2-\epsilon}
\end{align*}
and since $|D_t|\le4t$, a summation by parts and Proposition \ref{prop-sumquadra} give the result.
\end{proof}

\vskip 0.5 cm
\subsection{Bounds for Fourier coefficients}
Let $f\in S_{\ell+1/2}(4N,\chi)$ and put
$$f(z)=\sum_{n\ge1}a(n)n^{\frac{\ell-1/2}{2}}e(nz)$$ for $z\in\calH$.
 In the proof of Theorem \ref{theo-moment24}, we will use a bound for the coefficients $a(n)$ which must hold for arbitrary $f$.
  
 If $\ell\ge2$ and $t$ is squarefree then, by \cite[Theorem 1]{Iwcoef}, one has
$$|a(t)|\ll_{f,\epsilon} t^{3/14+\epsilon}$$
for any $\epsilon>0$. Actually this still holds if $t$ is divisible by $p^2$
for $p\mid4N$. Precisely, we have the following proposition.

\begin{prop}\label{prop-boundcoeffsqfr}
Let $f\in S_{\ell+1/2}(4N,\chi)$ where $\ell$ and $N$ are two positive integers. If $\ell=1$, we assume that $f$ is
in the orthogonal complement of the subspace spanned by single variable theta-functions. For all integer $n=tm^2$ 
with squarefree $t$ and $m\mid4N$, we have 
$$|a(n)|\ll_{f,\epsilon}n^{3/14+\epsilon}$$ for any $\epsilon>0$.
\end{prop}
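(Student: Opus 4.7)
The plan is to handle the bound in two cases, according to whether $n$ is squarefree. In the squarefree case $n=t$, the bound $|a(t)|\ll_{f,\epsilon}t^{3/14+\epsilon}$ is exactly \cite[Theorem 1]{Iwcoef}; for $\ell=1$, the orthogonality hypothesis on $f$ is precisely what is needed for Iwaniec's argument to go through, since it rules out contributions from single-variable theta series (which have coefficients of size $\sqrt t$ on squares).

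For $n=tm^2$ with $t$ squarefree and $m\mid 4N$, $m>1$, I would reduce to the squarefree case via the operator $U_{m^2}$. Consider
$$g(z) := (f\mid U_{m^2})(z) = \sum_{n\ge 1}a(nm^2)(nm^2)^{(\ell-1/2)/2}e(nz),$$
so that writing $g(z)=\sum_k b(k)k^{(\ell-1/2)/2}e(kz)$ yields $b(k)=a(km^2)m^{\ell-1/2}$. By standard Hecke theory for half-integral weight forms (see e.g. \cite{Shi}), $g$ belongs to $S_{\ell+1/2}(4Nm^2,\chi)$, a space whose level is bounded in terms of $N$. Applying Iwaniec's bound (i.e.\ the squarefree case above) to $g$ gives, for squarefree $t$,
$$|a(tm^2)|=m^{-(\ell-1/2)}|b(t)|\ll_{f,m,\epsilon}t^{3/14+\epsilon}.$$
Since $m$ ranges over the finite set of divisors of $4N$, the implicit constant depends only on $f$ and $\epsilon$, and using $m\ll_{N}1$ this is $\ll_{f,\epsilon}n^{3/14+\epsilon}$, as required.

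When $\ell=1$, applying Iwaniec's bound to $g$ requires verifying that $g$ is itself orthogonal to the subspace of single-variable theta series. This should follow from the observation that $U_{m^2}$ sends that subspace into itself (multiplication of a square by $m^2$ is again a square, so the Fourier support remains concentrated on squares), hence the Petersson adjoint of $U_{m^2}$ also stabilizes this subspace, and therefore the orthogonal complement is $U_{m^2}$-stable.

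The main technical point to check is the behavior of $U_{m^2}$ on $S_{\ell+1/2}(4N,\chi)$: namely, that the resulting function is a genuine cusp form of half-integral weight of level bounded polynomially in $N$, and, in weight $3/2$, that the theta-orthogonal subspace is preserved. Both properties are essentially standard, but the nontrivial multiplier system and character that appear in the half-integral weight framework warrant a careful verification; once these are in hand, the argument reduces cleanly to the squarefree case.
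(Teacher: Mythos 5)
The paper's own proof is a one-line citation: \cite[Theorem 1]{Waibel} is stated precisely for $n=tv^2w^2$ with $t$ squarefree, $v\mid(4N)^\infty$ and $(w,4N)=1$, and taking $w=1$ gives $|a(n)|\ll_{f,\epsilon}n^{3/14+\epsilon}$ directly, \emph{including} the weight $3/2$ case under the orthogonality hypothesis. Your reduction to the squarefree case via $U_{m^2}$ is therefore a genuinely different route, and for $\ell\ge2$ it is sound: $f\mid U_{m^2}$ is a cusp form of weight $\ell+1/2$ and level dividing $4Nm^2$ (in fact, since every prime factor of $m$ divides $4N$, the $U_{p^2}$ are the Hecke operators at bad primes and the level need not grow at all), your normalization bookkeeping $b(t)=a(tm^2)m^{\ell-1/2}$ is correct, and the constant depends only on $f$ and $\epsilon$ because $m$ runs over the finite set of divisors of $4N$.

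There is, however, a genuine gap at $\ell=1$, which is exactly the case the orthogonality hypothesis is there for. Iwaniec's \cite[Theorem 1]{Iwcoef} is proved only for weight $\ge 5/2$ --- the paper itself invokes it only ``if $\ell\ge2$'' --- and the claim that orthogonality to unary theta series is ``precisely what is needed for Iwaniec's argument to go through'' in weight $3/2$ is an assertion, not a proof: pushing the method down to weight $3/2$, where the relevant spectral/Rankin--Selberg analysis degenerates because of the residual theta contribution, is a substantive piece of work (it is part of what \cite{Waibel}, building on Duke's treatment of weight $3/2$, actually supplies). So the base case of your reduction is unproven exactly where the extra hypothesis matters. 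A second, smaller slip is in the stability argument: from ``$U_{m^2}$ preserves the theta subspace'' one deduces that the \emph{adjoint} $U_{m^2}^{*}$ preserves the orthogonal complement, not that $U_{m^2}$ does. What you actually need is that $U_{m^2}^{*}$ sends every unary theta series of level $4Nm^2$ into the span of unary theta series of level $4N$ (note the two theta subspaces live in different spaces, since $U_{m^2}$ changes the level); this is true, for instance via the behaviour of theta series under the Fricke involution, but it requires an argument in the direction opposite to the one you gave.
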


\begin{proof}
This is a straightforward consequence of \cite[Theorem 1]{Waibel}.
\end{proof}

From this we can deduce the following more general bound.

\begin{prop}\label{prop-boundcoeffgene}
Assume the hypotheses of the previous proposition hold. If, moreover, $N$ is odd and squarefree and $\chi$ is real then
 for all squarefree $t$ and all positive integer $n$,
$$|a(tn^2)|\ll_{f,\epsilon}t^{3/14}(tn)^\epsilon$$ for any $\epsilon>0$.

\end{prop}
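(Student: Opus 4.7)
The plan is to reduce the inequality to the case of a complete Hecke eigenform and then combine Proposition \ref{prop-boundcoeffsqfr} with the Shimura--Deligne bound \eqref{boundcoefftn2}. Since $N$ is odd and squarefree, the half-integral weight newform theory recalled in Section \ref{Modular} supplies a finite decomposition
\begin{equation*}
f \;=\; \sum_{M \mid N}\ \sum_{d \mid N/M}\ \sum_{g} c_{g,d}\, g|V_d,
\end{equation*}
where each $g$ ranges over a finite basis of $S_{\ell+1/2}^{\rm new}(4M,\chi)$ consisting of complete Hecke eigenforms and $V_d$ denotes the dilation $z \mapsto dz$. Only finitely many $c_{g,d}$ are nonzero, so it is enough to establish the desired bound on each summand.

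For a single complete eigenform $g$ at a level $4M \mid 4N$, the Shimura lift $\Sh g \in S_{2\ell}(2M,\chi^2)$ exists (the extra hypothesis on $f$ when $\ell = 1$ is exactly what guarantees this for the eigencomponents of $f$). Formula \eqref{relationShiExplicite}, together with Deligne's bound for the Hecke eigenvalues of $\Sh g$, then yields exactly \eqref{boundcoefftn2}, namely $|a_g(tn^2)| \ll_{g,\epsilon} |a_g(t)| n^\epsilon$. Applying Proposition \ref{prop-boundcoeffsqfr} to $g$ at its own level $4M$ bounds $|a_g(t)| \ll_{g,\epsilon} t^{3/14+\epsilon}$ for squarefree $t$, so multiplying gives the required bound $|a_g(tn^2)| \ll_{g,\epsilon} t^{3/14}(tn)^\epsilon$ for each such eigenform.

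It remains to control the dilated pieces $g|V_d$, whose normalized $m$-th Fourier coefficient is a scalar multiple of $a_g(m/d)$ when $d \mid m$ and vanishes otherwise. Given $m = tn^2$ with $t$ squarefree and $d \mid m$, write $m/d = t'(n')^2$ with $t'$ squarefree. A prime-by-prime computation, using that both $t$ and $d \mid N$ are squarefree, yields $t' = td/\gcd(t,d)^2$ and $n' = n\gcd(t,d)/d$, so $t' \le tN$ and $t'n' \le tn$. Substituting these into the eigenform estimate already proved for $g$ and absorbing the $d$- and $N$-dependent constants into the $f$-dependent implied constant produces $|a_{g|V_d}(tn^2)| \ll_{f,\epsilon} t^{3/14}(tn)^\epsilon$. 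Summing the finitely many contributions finishes the argument.

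The main obstacle is mostly bookkeeping. First one needs that $f$ does decompose into dilations of complete eigenforms, which is precisely where the hypothesis ``$N$ odd and squarefree'' is used; second, one must check that the squarefree part $t'$ of $m/d$ does not blow up relative to $t$. Both steps are elementary, and the quantitative content, namely Waibel's exponent $3/14$ and Deligne's bound for the Shimura lift, has already been imported via Proposition \ref{prop-boundcoeffsqfr} and \eqref{boundcoefftn2}.
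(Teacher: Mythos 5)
Your overall strategy --- reduce to complete eigenforms and then combine \eqref{boundcoefftn2} with Proposition \ref{prop-boundcoeffsqfr} --- is the same as the paper's, but the decomposition you start from is not the one that half-integral weight newform theory actually provides, and this is where the argument breaks. You write $f=\sum c_{g,d}\, g|V_d$ with $g$ a newform of level $4M$ and character $\chi$ and $V_d$ the dilation $z\mapsto dz$ for $d\mid N/M$. In weight $\ell+1/2$, dilation by a non-square $d$ does not preserve the character: it sends $S_{\ell+1/2}(4M,\chi)$ into $S_{\ell+1/2}(4Md,\chi\jacob{4d}{\cdot})$ because the theta multiplier picks up a quadratic twist, so the asserted decomposition with all the $g$ of character $\chi$ is not available, and Section \ref{Modular} never claims it. The statement that is actually available, \cite[Theorem 7]{Manickam90}, is that $f=\sum_i U(r_i^2)f_i$ with $r_i\mid 2N$ and each $f_i$ a complete eigenform of some level dividing $4N$ (possibly in Kohnen's plus space), where $U(r^2)$ maps $\sum c(n)e(nz)$ to $\sum c(r^2n)e(nz)$. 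With this operator the index $tn^2$ becomes $t(r_in)^2$: the squarefree part is untouched, so the bookkeeping you carry out with $t'=td/\gcd(t,d)^2$ is not needed --- and, conversely, that bookkeeping, while arithmetically fine, is computing coefficients of forms that do not occur in the correct decomposition.

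A second, smaller omission: some of the eigencomponents produced by \cite{Manickam90} lie in Kohnen's plus space, where the multiplicativity relation \eqref{boundcoefftn2} is indexed by fundamental discriminants $|D|$ rather than by arbitrary squarefree $t$ (see \cite{Kohnen82}); one must then compare $a_i(t(r_in)^2)$ with $|a_i(t)|$ or with $|a_i(4t)|$ depending on the class of $(-1)^\ell t$ modulo $4$, and Proposition \ref{prop-boundcoeffsqfr} is stated precisely so as to absorb the extra square factor $4$. Your proposal does not address this case at all. Neither issue is fatal to the strategy, but as written the proof rests on a decomposition that is false in the half-integral weight setting, so the gap is genuine.
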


\begin{proof}
By \cite[Theorems 7]{Manickam90}, $f$ can be written as a finite sum
$$f=\sum_{i}U(r_i^2)f_{i}$$
where $r_i\mid2N$, $U(r_i^2) : \sum\limits_{n\ge1}c(n)e(nz)\mapsto\sum\limits_{n\ge1}c(r_{i}^2n)e(nz)$ and $f_i$ is 
either a complete eigenform of $S_{\ell+1/2}(M,\chi)$ for some $M$ dividing $4N$ or 
a complete eigenform of Kohnen's plus space
$S_{\ell+1/2}^+(M,\chi)$ for some $M$ dividing $4N$. Hence,
$$a(tn^2)=\sum_{i}a_i(t(r_in)^2)$$ with $a_i(m)$ the $m^{\rm th}$ normalized coefficient of $f_i$. 
If $f_i$ is a classical eigenform then, by \eqref{boundcoefftn2} and Proposition \ref{prop-boundcoeffsqfr}, we have
$|a_i(t(r_in)^2)|\ll_{i,\epsilon} t^{3/14}(tr_in)^\epsilon$.

If $f_i$ is in the plus space, then relation \eqref{boundcoefftn2} still holds but with $t=|D|$ where $D$ is a fundamental
 discriminant (see \cite{Kohnen82}). In that case we have $|a_i(t(r_in)^2)|\ll_{i,\epsilon} |a_i(t)|(r_in)^\epsilon$ or
  $|a_i(t(r_in)^2)|\ll_{i,\epsilon} |a_i(4t)|(r_in/2)^\epsilon$. In both cases, we can apply Proposition \ref{prop-boundcoeffsqfr} 
  and get $|a_i(t(r_in)^2)|\ll_{i,\epsilon}t^{3/14}(tr_in)^\epsilon$ which is enough to conclude.
\end{proof}

\vskip 1cm
\section{Fourier coefficients in arithmetic progressions}
\label{Mainestimates}
The aim of this section is to prove Theorem \ref{theo-moment24}. Since we use the same tools as in \cite{Darreye}, 
we will skip some details. For self-contained study of this problem, we refer to the author's Phd thesis \cite{Darreye2}.
 
\vskip 0.5 cm
\subsection{Vorono\u \i\: summation formula}
Let $f(z)=\sum\limits_{n\ge1}a(n)n^{\frac{\ell-1/2}{2}}e(nz)\in S_{\ell+1/2}(4N)$ be a cusp form and let $w$ be a smooth 
$[0,1]$-valued function compactly support in $(0,1)$. Define for any $x>0$, any prime $p\le x$ and any  $a\:[p]$
\begin{equation}
\label{defElastpart}
E(x,p,a)=\frac{1}{\sqrt{x/p}}\sum_{n=a\:[p]}a(n)w(n/x).
\end{equation}

It is shown in \cite{Darreye} that $\frac{1}{\sqrt{x/p}}$ is the right normalization of the sum above since a squareroot cancellation
appears when $x$ and $p$ go to infinity in a certain range.\\

The first step consists in rearranging $E(x,p,a)$ by using the functional equation for $f$ twisted by an additive character. Such
an equation is established in \cite{Hulse} for the special case $N=1$. 
Yet, the proof can be easily adapted to any $N$ and one gets the following.

\begin{prop}
\label{prop-eqfonctwist}
Let $f\in S_{\ell+1/2}(4N)$ as above. Let $u$ and $q$ be two coprime integers such that $(q,4N)=1$. Put
 $$L(s,f,u/q)=\sum_{n\ge1}a(n)e_q(un)n^{-s}$$
then $L(s,f,u/q)$  converges absolutely for {\rm Re} $s>1$ and can be extended  to an entire function satisfying
$$\Lambda(s,f,u/q):= \left(\frac{\sqrt{4N}q}{2\pi}\right)^{s}\Gamma\left(s+\frac{\ell-1/2}{2}\right)L(s,f,u/q)=\omega_q(u)
\Lambda(1-s,f_0,-\overline{4Nu}/q)$$ 
where $u\bar u = 1 \:[q]$ and $\omega_q(u)=\epsilon_q^{-(2\ell+1)}\left(\frac{-\bar u}{q}\right)$. 

Moreover, this $L$-function has polynomial growth in vertical strips. 
%\text{i.e} pour tout 
%$\sigma_1<\sigma_2$, il existe $A>0$ tel que
%$$|L(s,f,u/q)|\ll_{f, q, \sigma_1,\sigma_2} (1+|t|)^{A}$$ pour tout $s=\sigma+it$ avec
% $\sigma\in[\sigma_1,\sigma_2]$ et $t\in\setR$. 
\end{prop}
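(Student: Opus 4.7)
The plan is to reduce the functional equation to a modular transformation identity via the Mellin transform. For $\mathrm{Re}(s)$ large, the Dirichlet series converges absolutely, and term-by-term integration using the integral representation of the Gamma function gives
$$\Lambda(s,f,u/q) = (\sqrt{4N}q)^{s}(2\pi)^{(\ell-1/2)/2}\int_0^\infty f(iy + u/q)\, y^{s+(\ell-1/2)/2 - 1}\,dy.$$
I would split this integral at $y_0 = 1/(\sqrt{4N}q)$. The tail $\int_{y_0}^\infty$ is absolutely convergent for every $s\in\setC$ by the exponential decay of the cusp form $f$ at the cusp $\infty$, and therefore defines an entire function of $s$.

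For the head, I would apply a modular transformation sending the small-$y$ point $iy+u/q$ to a point with large imaginary part, interpreted via $f_0$. Since $(q,4N)=1$, the cusp $u/q$ is $\Gamma_0(4N)$-equivalent to $0$, and Bezout yields $\gamma_0 = \left(\begin{smallmatrix} a & u \\ c & q\end{smallmatrix}\right) \in \Gamma_0(4N)$ with $4N \mid c$ and $aq-cu=1$; then $\sigma := \gamma_0 W_{4N}$ is a scaling matrix for the cusp $u/q$. A direct computation gives
$$\sigma^{-1}(iy+u/q) = \frac{c}{4Nq} + \frac{i}{4Nq^2 y} \equiv -\frac{\overline{4Nu}}{q} + \frac{i}{4Nq^2 y} \pmod{\setZ},$$
where the congruence uses $aq - cu = 1\Rightarrow cu \equiv -1\pmod q$. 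Combining the $\Gamma_0(4N)$-automorphy of $f$ with the relation $f|_{\ell+1/2}W_{4N}=f_0$ and the invariance of $f_0$ under integer translations (because $W_{4N}$ normalizes $\Gamma_0(4N)$) then yields an identity of the shape
$$f(iy+u/q) = \omega_q(u)\cdot \lambda(y)\cdot f_0\!\left(\frac{i}{4Nq^2 y} - \frac{\overline{4Nu}}{q}\right),$$
for an explicit power $\lambda(y)$ of $y$. The substitution $y \mapsto y^\ast = 1/(4Nq^2 y)$ transforms the head integral into precisely the tail piece of the Mellin representation of $\Lambda(1-s, f_0, -\overline{4Nu}/q)$; adding back the corresponding entire head of that representation assembles the functional equation.

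The main obstacle is the bookkeeping of the scalar multiplier. Expanding the half-integral-weight automorphy factor $J(\gamma_0,\cdot)^{2\ell+1} = \epsilon_q^{-(2\ell+1)}\left(\frac{c}{q}\right)(cz+q)^{(2\ell+1)/2}$ and combining with the Fricke multiplier $(4N)^{(2\ell+1)/4}(-iz)^{(2\ell+1)/2}$ from $W_{4N}$, the $z$-dependent pieces rearrange into the power $\lambda(y)$ together with the Jacobian of the change of variables, while the residual scalar reduces to $\epsilon_q^{-(2\ell+1)}\left(\frac{c}{q}\right)$. Writing $c = 4Nc'$, the Bezout relation forces $4Nc'u \equiv -1 \pmod q$, so $\left(\frac{c}{q}\right) = \left(\frac{4N}{q}\right)\left(\frac{c'}{q}\right) = \left(\frac{-\bar u}{q}\right)$, using $\left(\frac{4N}{q}\right)\left(\frac{\overline{4N}}{q}\right)=1$. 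This recovers $\omega_q(u)=\epsilon_q^{-(2\ell+1)}\left(\frac{-\bar u}{q}\right)$ and is precisely the computation performed for $N=1$ in \cite{Hulse}; the hypothesis $(q,4N)=1$ is what makes the general case a mechanical adaptation (the Chinese Remainder Theorem provides the required $\gamma_0$, and the $4N$-part of $c$ contributes trivially to the Kronecker symbol). Polynomial growth on vertical strips then follows from a Phragmén--Lindelöf convexity argument applied between the absolute convergence regions $\mathrm{Re}(s)>1$ and $\mathrm{Re}(s)<0$.
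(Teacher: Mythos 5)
Your argument is correct and is exactly the approach the paper relies on: the paper gives no proof of this proposition, instead citing \cite{Hulse} for the case $N=1$ and asserting the adaptation to general $N$ is routine, and your Mellin-transform/scaling-matrix computation (including the identification of the multiplier $\omega_q(u)$ via $c\equiv -4N\overline{4Nu}\:[4Nq]$ and the Phragmén--Lindelöf step for polynomial growth) is precisely that standard adaptation carried out correctly.
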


Next, using Mellin transform, we easily deduce the so-called Vorono\u \i\ summation formula.
\begin{prop}
Let $f$, $u$ and $q$ be as above. Then for all $x>0$,
$$\sum_{n\ge1}a(n)e_q(un)w(n/x)=\omega_q(u)\frac{x}{\sqrt{4N} q}\sum_{m\ge1}a_0(m)e_q(-\overline{4Nu}\:m)
B\left(\frac{m}{4Nq^2/x}\right)$$
where $a_0(m)$ is the $m^{\rm th}$  normalized Fourier coefficient of $f_0$ and $B$ is a smooth function of rapid decay as 
in \cite[Section 3]{Darreye}.
\end{prop}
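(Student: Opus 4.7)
The approach is the classical Mellin--Barnes method: combine Mellin inversion applied to $w$ with the functional equation of $L(s,f,u/q)$ from Proposition \ref{prop-eqfonctwist}. Since $w$ is smooth with compact support in $(0,+\infty)$, its Mellin transform $\widetilde w(s)=\int_0^\infty w(y)y^{s-1}dy$ is entire and of rapid decay on every vertical line, and Mellin inversion reads $w(n/x)=\frac{1}{2\pi i}\int_{(\sigma)}\widetilde w(s)(n/x)^{-s}ds$ for any real $\sigma$. Choosing $\sigma>1$ so that the Dirichlet series defining $L(s,f,u/q)$ converges absolutely, I would swap sum and integral to get
$$\sum_{n\ge1}a(n)e_q(un)w(n/x)=\frac{1}{2\pi i}\int_{(\sigma)}\widetilde w(s)\,x^s L(s,f,u/q)\,ds.$$

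Next I would shift the contour from $\re s=\sigma>1$ to $\re s=\sigma'<0$. This is justified because Proposition \ref{prop-eqfonctwist} ensures that $L(s,f,u/q)$ is entire with polynomial growth in vertical strips, while $\widetilde w(s)$ decays faster than any polynomial; no residues are picked up and the contributions from the horizontal segments vanish as they are pushed to $\pm i\infty$. On the new contour, the functional equation of Proposition \ref{prop-eqfonctwist} lets me replace $L(s,f,u/q)$ by
$$\omega_q(u)\left(\frac{\sqrt{4N}q}{2\pi}\right)^{1-2s}\frac{\Gamma\!\left(1-s+(\ell-1/2)/2\right)}{\Gamma\!\left(s+(\ell-1/2)/2\right)}L(1-s,f_0,-\overline{4Nu}/q).$$
After the change of variable $s\mapsto 1-s$, the integration is on $\re s=1-\sigma'>1$, where $L(s,f_0,-\overline{4Nu}/q)$ expands as an absolutely convergent Dirichlet series in $a_0(m)e_q(-\overline{4Nu}\,m)m^{-s}$. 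Interchanging summation and integration once more, and factoring $x/(\sqrt{4N}q)$ out of the powers of $\sqrt{4N}q/(2\pi)$ and $x$, the integral remaining for each $m$ is exactly $B(m/(4Nq^2/x))$, where $B$ is the inverse-Mellin transform packaging $\widetilde w(1-s)$ with the ratio of Gamma factors; this is the function described in \cite[Section 3]{Darreye}, and its rapid decay at infinity follows from Stirling.

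The only non-formal point is the justification of the contour shift and of the two swaps of summation and integration, all of which rest on the polynomial growth of $L(s,f,u/q)$ in vertical strips guaranteed by Proposition \ref{prop-eqfonctwist}, combined with the super-polynomial decay of $\widetilde w(s)$ coming from the smoothness of $w$. Apart from this analytic bookkeeping, the derivation is a routine Mellin transform manipulation, so I do not expect a serious obstacle beyond verifying that the function $B$ produced by this procedure matches the one defined in \cite[Section 3]{Darreye}.
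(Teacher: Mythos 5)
Your proposal is correct and follows exactly the route the paper intends: the paper derives this Vorono\u \i\ formula directly from Proposition \ref{prop-eqfonctwist} by Mellin inversion, a contour shift justified by the entirety and polynomial growth of $L(s,f,u/q)$ together with the rapid decay of $\widetilde w$, the substitution $s\mapsto 1-s$, and the identification of $B$ as the inverse Mellin transform of the Gamma-factor ratio against $\widetilde w(1-s)$. No discrepancy to report.
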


By Mellin transform again, we see that
$$\sum_{n\ge1}a(n)w(n/x)\ll_{f,A} x^{-A}$$ for any $A>0$ so detecting the congruence in the sum in \eqref{defElastpart} and
 applying the last proposition, we have for any $p\nmid 4N$
$$E(x,p,a)=\frac{\epsilon_p^{-(2\ell+1)}}{\sqrt Y}\sum_{m\ge1}a_0(m)\text{Sal}_p(\overline{4N}m,a)B\left(\frac{m}{Y}\right)
+O_{f,A}(x^{-A})$$
where $Y=4Np^2/x$ and
$$\text{Sal}_p(u,v)=\frac{1}{\sqrt p}{\sum_{b\:[p]}}^\times \jacob{b}{p}e_p(ub+v\bar b)$$ is the normalized Salié sum. Classically 
(see \cite[Lemme 8.4.3]{coursKo}), if $u$ and $v$ are coprime to $p$ then 
$$\text{Sal}_p(u,v)=\jacob{v}{p}\epsilon_p\sum_{y^2=uv\:[p]}e_p(2y).$$

Thus, using \cite[Proposition 3]{Darreye} (where in the proof, $f$ does not need to be an eigenform), we infer that
\begin{equation}
\label{Erearrange}
E(x,p,a)=\epsilon_p^{-2\ell}\legendre{a}\frac{1}{\sqrt Y}\sum_{1\le m\le Y^{1+\eta}}a_0(m)\text{Sa}_p(\overline{N}ma)
B\left(\frac{m}{Y}\right)+O_{f,A}(Y^{-A})
\end{equation}
for any $\eta>0$, provided that $Y^{1+\eta}<p$, and where  
$$\text{Sa}_p(y)=\left\{\begin{array}{cc} e_p(\sqrt y^p) + e_p(-\sqrt y^p) & \text{ if } \jacob{y}{p}=1,\\
                                                                              0 & \text{otherwise.}\end{array}\right.$$

\vskip 0.5 cm
\subsection{Some estimates on sums of Fourier coefficients} 

Before proving Theorem \ref{theo-moment24} we establish some basic facts on certain sums of Fourier coefficients.

\begin{lemme}\label{lemme-suma(n)2}
Let $f\in S_{\ell+1/2}(4N)$ as above. Then
$$\sum_{n\ge1}|a(n)|^{2}w(n/x)^{2}\sim c_{f}\|w\|_{2}^{2}x \text{ \;\;\;as $x\to+\infty$}$$
where $$c_{f}= \frac{(4\pi)^{\ell+1/2}}{\Gamma(\ell+1/2){\rm Vol}\left(\Gamma_{0}(4N)\backslash \calH\right)}
\int_{\Gamma_{0}(4N)\backslash \calH}|f(z)|^{2}y^{\ell+1/2}\frac{dxdy}{y^{2}}.$$
\end{lemme}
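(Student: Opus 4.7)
The plan is to produce the constant $c_f$ as the residue at $s=1$ of the Rankin-Selberg Dirichlet series
$$D(s) = \sum_{n\ge 1} \frac{|a(n)|^2}{n^s},$$
and then extract the smoothed sum via a Mellin/contour-shift argument.

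First I would establish the analytic properties of $D(s)$ by Rankin-Selberg unfolding. Because the half-integral weight automorphy factor has modulus one, the function $|f(z)|^2 y^{\ell+1/2}$ is $\Gamma_0(4N)$-invariant on $\calH$. Unfolding against the real-analytic Eisenstein series $E(z,s)$ at the cusp at infinity of $\Gamma_0(4N)$ yields
$$\int_{\Gamma_0(4N)\backslash \calH} |f(z)|^2 y^{\ell+1/2} E(z,s) \frac{dxdy}{y^2} = \frac{\Gamma(s+\ell-1/2)}{(4\pi)^{s+\ell-1/2}} D(s)$$
for $\re\: s$ large enough (Proposition \ref{prop-boundcoeffgene} already ensures absolute convergence of $D(s)$ in a right half-plane past $s=1$). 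The Eisenstein series $E(z,s)$ is meromorphic on $\setC$ with a unique simple pole in $\re\: s \ge 1/2$ located at $s=1$, whose residue is the constant $1/\text{Vol}(\Gamma_0(4N)\backslash\calH)$, and it has polynomial growth in vertical strips. Transferring these properties through the identity above shows that $D(s)$ is meromorphic in a half-plane $\re\: s > 1-\delta$ for some $\delta>0$ with a unique simple pole at $s=1$ of residue exactly
$$\frac{(4\pi)^{\ell+1/2}}{\Gamma(\ell+1/2)\text{Vol}(\Gamma_0(4N)\backslash\calH)} \int_{\Gamma_0(4N)\backslash\calH} |f(z)|^2 y^{\ell+1/2}\frac{dxdy}{y^2} = c_f,$$
together with polynomial growth on vertical lines in that strip.

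Next I would apply Mellin inversion. Let $M(s) = \int_0^\infty w(y)^2 y^{s-1} dy$ be the Mellin transform of $w^2$; since $w$ is smooth and compactly supported in $(0,1)$, $M(s)$ is entire and decays faster than any polynomial on vertical strips. Thus, for $\sigma$ sufficiently large,
$$\sum_{n\ge 1} |a(n)|^2 w(n/x)^2 = \frac{1}{2\pi i}\int_{(\sigma)} M(s)\, x^s D(s)\, ds.$$
Shifting the contour to $\re\: s = 1 - \delta/2$ picks up the residue $c_f M(1) x = c_f \|w\|_2^2\, x$ at $s=1$, while the remaining integral is $O(x^{1-\delta/2})$ by the growth estimate on $D(s)$ and the rapid decay of $M(s)$. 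This gives the asymptotic of the statement, in fact with a power-saving error term.

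The main technical point is bookkeeping: one must pick the normalization of $E(z,s)$ so that its residue at $s=1$ is genuinely $1/\text{Vol}(\Gamma_0(4N)\backslash\calH)$, and track the gamma/$4\pi$ factors in the unfolding identity for weight $\ell+1/2$, so that the residue of $D(s)$ exactly matches the $c_f$ in the statement. These are classical computations, but they are the only source of a possible constant discrepancy; the analytic continuation and polynomial growth of $D(s)$ in vertical strips are then immediate consequences of the corresponding properties of $E(z,s)$, and the contour shift goes through with no further difficulty.
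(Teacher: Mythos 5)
Your proposal is correct and follows essentially the same route as the paper: write the smoothed sum as a Mellin integral against $D(s,f\times\bar f)$, use the Rankin--Selberg continuation to get a simple pole at $s=1$ with residue $c_f$, and shift the contour. The only difference is that the paper simply cites \cite[Section 13.4]{IwaTopics} for the analytic properties of $D(s,f\times\bar f)$ (allowing finitely many simple poles in $(1/2,1]$ and shifting only to $\re\, s=1-\epsilon$), whereas you carry out the unfolding explicitly; your constant bookkeeping matches the stated $c_f$.
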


\begin{proof}
We have for any $\sigma>1$,
$$\sum_{n\ge1}|a(n)|^{2}w(n/x)^{2}=\frac{1}{2i\pi}\int_{(\sigma)}D(s,f\times \bar f)\widehat{w^{2}}(s)x^{s}ds$$
 where $D(s,f\times \bar f)=\sum\limits_{n\ge1}|a(n)|^{2}n^{-s}$ and 
 $\widehat{w^{2}}(s)=\int_{0}^{+\infty}w^{2}(t)t^{s-1}dt$ is the Mellin transform of $w^{2}$. Because $w$ is smooth and 
 compactly supported in $(0,1)$, $\widehat{w^{2}}(s)$ (as well as $\hat w(s)$) is well-defined on the whole complex plane and 
 it is of rapid decay in vertical strips.
 
 Classically (see \cite[Section 13.4]{IwaTopics}), $D(s,f\times \bar f)$ extends to a meromorphic function for $\re\: s\ge1/2$
  with a finite number of poles which are simple and in the interval $1/2<s\le1$. 
  At $s=1$, there is a simple pole whose residue is  $c_{f}$. 
 
Hence, moving the contour of integration to $\sigma=1-\epsilon$ with $ \epsilon>0$ small enough, we get
$$\sum_{n\ge1}|a(n)|^{2}w(n/x)^{2}=c_{f}\widehat{w^{2}}(1)x +
\frac{1}{2i\pi}\int_{(1-\epsilon)}D(s,f\times \bar f)\widehat{w^{2}}(s)x^{s}ds$$
 and because $D(s,f\times \bar f)$ is of polynomial growth on vertical strips, we have the desired conclusion.

\end{proof}

\begin{lemme}\label{lemme-suma(n)}
Let $f\in S_{\ell+1/2}(4N)$ as above and take a class $a\:[p]$. Then
$$\sum_{n=a\:[p]}a(n)w(n/x) = O_{f, w, \epsilon}(x^{- \epsilon}p^{1+ 2 \epsilon})$$
for any $ \epsilon>0$.
\end{lemme}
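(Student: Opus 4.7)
The plan is to reduce the bound to an estimate on $E(x,p,a)$. By the definition \eqref{defElastpart}, $\sum_{n=a\:[p]}a(n)w(n/x)=\sqrt{x/p}\,E(x,p,a)$, so it suffices to control $|E(x,p,a)|$.

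Fix a small parameter $\eta>0$ and set $Y=4Np^2/x$. For primes $p\nmid 4N$ such that the range condition $Y^{1+\eta}<p$ is satisfied (which covers essentially all $p\ll x^{1-\delta}$), apply the pre-rearranged Voronoi identity \eqref{Erearrange} directly. The Salie factor satisfies $|\text{Sa}_p(y)|\le 2$, and $B$ is uniformly bounded on the relevant range, so the problem reduces to estimating $\sum_{1\le m\le Y^{1+\eta}}|a_0(m)|$. By Cauchy--Schwarz together with the Rankin--Selberg-type bound $\sum_{m\le M}|a_0(m)|^2\ll_f M$, which follows from Lemma \ref{lemme-suma(n)2} applied to the Fricke involute $f_0$ (after a routine dyadic removal of the test function), this sum is $\ll Y^{1+\eta}$. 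Hence $|E(x,p,a)|\ll Y^{1/2+\eta}$, and multiplying by $\sqrt{x/p}$ yields
$$\left|\sum_{n=a\:[p]}a(n)w(n/x)\right|\ll_{f,w,N,\eta}\; x^{-\eta}p^{1/2+2\eta},$$
which is in fact strictly stronger than the claim $x^{-\epsilon}p^{1+2\epsilon}$.

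Two edge cases are handled separately. The finitely many primes $p\mid 4N$ are absorbed into the $f$-dependent implicit constant. When $p$ is so large that the Voronoi range fails (essentially when $p$ is of size $x$), a direct Cauchy--Schwarz bound
$$\left|\sum_{n=a\:[p]}a(n)w(n/x)\right|^2\le \#\{n\le x:\ n=a\:[p]\}\cdot\sum_{n\le x}|a(n)|^2\ll (x/p+1)\,x,$$
combined with Rankin--Selberg applied to $f$ itself, is already much smaller than $p^{1+2\epsilon}x^{-\epsilon}$ in that regime. The main (and really only) technical point is recognising that the trivial modulus bound $|\text{Sa}_p|\le 2$ combined with Rankin--Selberg on the dual form $f_0$ is enough: no further cancellation in the dual sum is required, which is why this lemma admits such a soft proof.
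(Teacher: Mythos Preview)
Your argument is correct and in fact yields a stronger bound than the paper records. The paper's own proof proceeds differently: it detects the congruence with additive characters, writes each inner sum as a Mellin integral of $L(s,f,b/p)$, shifts the contour to $\re s=-\epsilon$, and then invokes the functional equation of Proposition~\ref{prop-eqfonctwist} to bound $|L(-\epsilon+it,f,b/p)|\ll p^{1+2\epsilon}$ term by term in $b$. Since no cancellation in the $b$-sum is exploited, this loses a factor $\sqrt p$ relative to your approach. Your use of \eqref{Erearrange} is essentially the same functional equation, but the $b$-sum has already been executed into a Sali\'e sum, whose square-root cancellation is what buys you $p^{1/2+2\eta}x^{-\eta}$ instead of $p^{1+2\epsilon}x^{-\epsilon}$. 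So the two proofs are morally the same input (Proposition~\ref{prop-eqfonctwist}) read on the $L$-function side versus the arithmetic side, with yours being sharper.

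Two small caveats. First, formula \eqref{Erearrange} as written carries the factor $\legendre{a}$ and is derived from the explicit Sali\'e evaluation, which needs $a\not\equiv 0\ [p]$; for $a\equiv 0$ you should instead quote the preceding identity with $\text{Sal}_p(\overline{4N}m,a)$ and use the uniform bound $|\text{Sal}_p|\le 2$ (Gauss sum when the second argument vanishes). Second, your handling of $p\mid 4N$ is not quite right: ``absorb into the constant'' does not address the $x$-dependence, since the trivial bound on the sum is of size $x$. That said, the paper's own proof also tacitly assumes $p\nmid 4N$ (Proposition~\ref{prop-eqfonctwist} requires $(q,4N)=1$), and in every application of the lemma $p\to\infty$, so this is a shared cosmetic omission rather than a genuine gap.
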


\begin{proof}
Write
\begin{align*}
\sum_{n=a\:[p]}a(n)w(n/x) &= \frac{1}{p}\sum_{b\:[p]}e_{p}(-ba)\sum_{n\ge1}a(n)e_{p}(bn)w(n/x)\\
&= \frac{1}{p}\sum_{b\:[p]}e_{p}(-ba)\frac{1}{2i\pi}\int_{(\sigma)}L(s,f,b/p)\hat w(s)x^{s}ds
\end{align*}
for any $\sigma>1$. Moving the contour of the integral to $\sigma =- \epsilon$, we get 
$$\sum_{n=a\:[p]}a(n)w(n/x)=\frac{1}{p}\sum_{b\:[p]}e_{p}(-ba)\frac{1}{2i\pi}\int_{(-\epsilon)}L(s,f,b/p)\hat w(s)x^{s}ds$$
 since the integrated functions are entire. Using the functional equation given in Proposition  \ref{prop-eqfonctwist} 
(for $b\neq0\:[p]$), the result follows.

\end{proof}

We will also need to compare sums of Fourier coefficients to Dirichlet series over arithmetic progressions.

\begin{lemme}\label{lemme-sumnalpha}
Let $0<\alpha<1/2$ and take a class $a\:[p]$. Then
$$\sum_{n=a\:[p]}n^{-\alpha}w(n/x)=O\left(\frac{x^{1-\alpha}}{p}\right)$$
\end{lemme}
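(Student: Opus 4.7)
The plan is to bound the sum crudely by combining the compact support of $w$ with a simple count of lattice points in an arithmetic progression. Since $w$ is supported in $(0,1)$ and is compactly supported there, choose $\epsilon_{0}>0$ such that $\mathrm{supp}\,w\subset[\epsilon_{0},1]$. Then every term contributing to the sum satisfies $n\in[\epsilon_{0}x,x]$, so $n^{-\alpha}\le(\epsilon_{0}x)^{-\alpha}\ll_{w}x^{-\alpha}$.

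Next I would bound the number of surviving terms. The integers $n$ with $n\equiv a\:[p]$ and $n\le x$ number at most $\lfloor x/p\rfloor+1\ll x/p$ (the constant being absolute, assuming as always that $p\le x$, which is the regime in which the lemma is applied in the rest of the paper). Multiplying the two bounds gives
$$\sum_{n=a\:[p]}n^{-\alpha}w(n/x)\ll_{w}\frac{x}{p}\cdot x^{-\alpha}=\frac{x^{1-\alpha}}{p},$$
as claimed. (If one prefers a clean asymptotic with a precise constant, one can equivalently recognize the sum as a Riemann sum of stepsize $p/x$ approximating $x^{1-\alpha}\int_{0}^{\infty}u^{-\alpha}w(u)\,du$, with error controlled by $\|w'\|_{\infty}$; the integral converges since $\alpha<1$ and $w$ vanishes near $0$.)

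There is essentially no obstacle here: the estimate is a trivial consequence of the boundedness of $n^{-\alpha}w(n/x)$ on the support and the obvious count of integers in an arithmetic progression. The hypothesis $\alpha<1/2$ is not even needed for this step—$\alpha<1$ would suffice—but it is the range in which the lemma is subsequently used.
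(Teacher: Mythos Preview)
Your argument is correct. The paper's proof takes a slightly different tack: rather than exploiting that the compact support of $w$ lies away from $0$, it simply uses $w\le\mathbf 1_{(0,1)}$, reindexes the progression as $n=a+kp$, and compares the resulting sum $\sum_{k\le x/p}(a+kp)^{-\alpha}$ with the integral $\int_{1}^{x/p}(a+tp)^{-\alpha}\,dt\ll x^{1-\alpha}/p$. Your approach is a bit cleaner, since invoking $\mathrm{supp}\,w\subset[\epsilon_{0},1]$ makes each term $\ll_{w}x^{-\alpha}$ outright and reduces the problem to a bare lattice-point count; the paper's route has the minor advantage of not needing the lower support bound (and would still work if $w$ were merely supported in $(0,1)$ without being bounded away from $0$). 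Your closing observation that $\alpha<1/2$ is unnecessary here is accurate for both arguments.
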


\begin{proof}
Since $w$ is supported in $(0,1)$ and takes values in $[0,1]$, we have
\begin{align*}
\left|\sum_{n=a\:[p]}n^{-\alpha}w(n/x)\right| &\le \sum_{n\le x/p} \frac{1}{(a+np)^{\alpha}}\\
&\ll \int_{1}^{x/p} \frac{dt}{(a+tp)^{\alpha}}\\
&\ll \frac{x^{1-\alpha}}{p}.
\end{align*}

\end{proof}

We are now ready to prove Theorem \ref{theo-moment24}.

\vskip 0.5 cm
\subsection{Proof of Theorem \ref{theo-moment24}}

Since the computation of the second moment is the same whether the $a(n)$'s (or equivalently the $a_0(n)$'s) are real or not,
we will assume from now that they are. Let $\nu\in\{2,4\}$ and write for $p\nmid 4N$

$$\frac{1}{p}{\sum_{a\:[p]}}^\times E(x,p,a)^\nu= \frac{1}{2}M_\nu^+ + \frac{1}{2}M_\nu^-$$ with
$$M_\nu^\pm = \frac{2}{p}\sum_{\legendre{a}=1} E(x,p,N\mu_\pm a)^\nu 
= \frac{1}{p}{\sum_{b\:[p]}}^{\times} E(x,p,N\mu_\pm b^{2})^\nu$$
and $\mu_\pm$ is any positive integer such that $\legendre{\mu_\pm}=\pm1$.

Then, by \eqref{Erearrange} and \cite[Lemma 5]{Darreye}, we have
\begin{align*}
M_\nu^\pm&=\frac{1}{Y^{\nu/2}}\sum_{\tiny \begin{array}{c}1\le m_i\le Y^{1+\eta}\\
1\le i\le\nu, \legendre{m_i}=\pm1\end{array}}\prod_{i=1}^\nu a_0(m_i)B\left(\frac{m_i}{Y}\right)
\sum_{\mathbf e\in\{\pm1\}^\nu}\delta_p\left(\sum_{i=1}^\nu e_i\sqrt{\mu_\pm m_i}^p\right)  \\
   &\phantom{eeeeeeeeeeeeeeeeeeeeeeeeee}+O_{f,A}\left(\frac{Y^{\nu/2}}{p}+Y^{-A}\right)
\end{align*}
with $Y=4Np^{2}/x$.
If $\nu=2$ then notice that
$$e_1\sqrt{\mu_\pm m_1}^p+e_2 \sqrt{\mu_\pm m_2}^p=0\:[p] \;\Longleftrightarrow\;
\left\{\begin{array}{c}m_1=m_2\\e_1e_2=-1 \end{array}\right.$$
since $1\le\sqrt{\mu_\pm m_i}^p<p/2$ and $1\le m_i <p$. Therefore,

$$M_2^\pm=\frac{2}{Y}\sum_{\tiny \begin{array}{c}1\le m\le Y^{1+\eta}\\ \legendre{m}=\pm1\end{array}}
a_0(m)^2B^2\left(\frac{m}{Y}\right)+O_{f,A}\left(\frac{Y}{p}+Y^{-A}\right)$$ 
and 
$$\frac{1}{p}{\sum_{a\:[p]}}^\times E(x,p,a)^2=\frac{1}{Y}\sum_{1\le m\le Y^{1+\eta}}
 a_0(m)^2B^2\left(\frac{m}{Y}\right)+O_{f,A}\left(\frac{Y}{p}+Y^{-A}\right)$$ 
so, if $Y\to+\infty$ with $Y^{1+\eta}<p$, we get the first assertion of
Theorem \ref{theo-moment24} since, by Lemma \ref{lemme-suma(n)2} or simply \cite[Section 6]{Darreye}, we have 
\begin{equation}
\label{suma(n)2B}
\frac{1}{Y}\sum_{1\le m\le Y^{1+\eta}}a_0(m)^2B^2\left(\frac{m}{Y}\right) \sim c_{f}\|w\|_{2}^{2}\text{ \;\;\;\;as $Y\to+\infty$} 
\end{equation}
and the constant $c_{f}$ is the same as in Lemma \ref{lemme-suma(n)2} because a change of variable shows that $f$ and 
$f_{0}$ have the same Petersson norm.\\

Things are a bit trickier when $\nu=4$. Put
$$Q_4(\mathbf x)=\prod_{\tiny \begin{array}{c}\mathbf e\in\{\pm1\}^4\\ e_1=1\end{array}}\sum_{i=1}^4e_i\sqrt x_i$$
for any positive $x_1$, $x_2$, $x_3$, $x_4$. Because of the parity in the variables $\sqrt x_i$'s of the right-hand side of the above
 equality, we may view $Q_4(\mathbf x)$ as a homogeneous polynomial of $\setZ[x_1,x_2,x_3,x_4]$. 
 
 For $1\le m_1,m_2,m_3,m_4\le Y^{1+\eta}$ we have 

 $$|Q_4( m_1,m_2,m_3,m_4)|\le\prod_{\tiny \begin{array}{c}\mathbf e\in\{\pm1\}^4\\ e_1=1\end{array}}
      4 Y^\frac{1+\eta}{2}\le 2^{16}Y^{4(1+\eta)}.$$

Thus, if $p\ll x^{4/7-\epsilon}$ then there exists $\epsilon'>0$ such that $p^{7/4+2\epsilon'}\ll x $ which implies that
$p^{2(1+\epsilon')}/x\ll p^{1/4}$ so, taking $0<\eta<\epsilon'$, we have

$$2^{16}Y^{4(1+\eta)}<p/2$$ for $Y$ large enough. Assume this is the case, then for any $\mathbf e\in\{\pm1\}^4$ and
any $\mathbf m=(m_1,m_2,m_3,m_4)$ with $1\le m_i\le Y^{1+\eta}$ and $\legendre{m_i}=\pm1$, one has
$$\sum_{i=1}^4 e_i\sqrt{\mu_\pm m_i}^p=0\:[p]\;\Rightarrow\; Q_4(\mu_{\pm}\mathbf m)=0\:[p]
\;\Rightarrow\;  Q_4(\mathbf m)=0\;\Rightarrow\; \exists \mathbf e'\in\{\pm1\}^4, 
\sum_{i=1}^4 e_i'\sqrt m_i=0.$$

For any $1\le i\le4$ and $1\le m_i\le Y^{1+\eta}$, write $m_i=t_ir_i^2$ where $t_i$ is 
squarefree and $r_i\ge1$. Since the different values of the $\sqrt t_i$'s are linearly independent over $\setQ$, then 
 $\sum\limits_{i=1}^4 e_i'\sqrt m_i=0$ only if $|\{t_1,t_2,t_3,t_4\}|=1$ or $2$. In the second case, say $t_1=t_2\neq t_3=t_4$, 
 $$\sum_{i=1}^4 e_i'\sqrt m_i=0\;\Rightarrow\; (e'_{1}r_{1}+e'_{2}r_{2})\sqrt t_{1} + (e'_{3}r_{3}+e'_{4}r_{4})\sqrt t_{3} =0
 \;\Rightarrow\; \left\{\begin{array}{c}r_1=r_2\\e'_1e'_2=-1\\r_3=r_4\\
 e'_3e'_4=-1 \end{array}\right.\;\Rightarrow\;\left\{\begin{array}{c} m_1=m_2\\m_3=m_4 \end{array}\right.$$
since $r_i\ge1$ for all $i$. But if  $m_1=m_2\neq m_3=m_4 $ then 
\begin{align*}
(e_1+e_2)\sqrt{\mu_\pm m_1}^p+(e_3+e_4)\sqrt{\mu_\pm m_3}^p=0\:[p] &\;\Longleftrightarrow\;
 e_1=-e_2 \text{ and } e_3=-e_4.
\end{align*}
Indeed, if, for example, $e_{1}+e_{2}\neq0$ \emph{i.e.} $e_{1}+e_{2}\in\{\pm 2\}$, then $e_{3}+e_{4}\neq0$ 
(otherwise $\sqrt{\mu_\pm m_1}^p=0\:[p]$) and $\mu_{\pm}m_{1}=\mu_\pm m_3 \:[p]$ which implies $m_{1}= m_3$ but
we have excluded this case. This proves the necessary condition of the above equivalence and the sufficient condition is trivial. 
Therefore, 
$$\sum_{\mathbf e\in\{\pm1\}^4}\delta_p\left(\sum\limits_{i=1}^4 e_i\sqrt{\mu_\pm m_i}^p\right)=4.$$

Since this discussion is the same if $m_1=m_3\neq m_2=m_4$ or $m_1=m_4\neq m_2=m_3$, it allows us to write 
\begin{equation}
\label{defM4pm}
M_4^\pm=\frac{12}{Y^{2}}\sum_{\tiny \begin{array}{c}1\le m_1, m_2\le Y^{1+\eta}\\
t_1\neq t_2, \legendre{m_i}=\pm1\end{array}} a_0(m_1)^2B^2\left(\frac{m_1}{Y}\right)a_0(m_2)^2B^2\left(\frac{m_2}{Y}\right)
+\frac{R}{Y^2} +O_{f,A}\left(\frac{Y^{2}}{p}+Y^{-A}\right)
\end{equation}
where
\begin{equation}
\label{defRmoment4}
R=\underset{\tiny \begin{array}{c}t\le Y^{1+\eta}\\ \legendre{t}=\pm1\end{array}}{{\sum}^\flat}
 \sum_{\tiny \begin{array}{c}1\le tr_i^2\le Y^{1+\eta}\\
1\le i\le4\end{array}}\prod_{i=1}^4 a_0(tr_i^2)B\left(\frac{tr_i^2}{Y}\right)
\sum_{\mathbf e\in\{\pm1\}^4}\delta_p\left(\sum_{i=1}^4 e_i\sqrt{\mu_\pm tr_i^2}^p\right).
\end{equation}

We are going to show that this last term is negligible. Precisely we have the following.

\begin{prop}\label{prop-boundR}
Define $R$ as in \eqref{defRmoment4}. Then for any $\epsilon>0$ and $\eta$ sufficiently small
$$|R|\ll_{f,\epsilon}Y^{2-1/7+\epsilon}.$$ 
\end{prop}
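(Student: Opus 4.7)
The plan is to bound $R$ by reducing the Salié-style congruence in the inner sum to a linear congruence in the $r_i$'s, and then combining the subconvex bound on $a_0(t r_i^2)$ from Proposition~\ref{prop-boundcoeffgene} with a lattice-point count.

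First I would simplify the congruence $\sum_{i=1}^4 e_i \sqrt{\mu_\pm t r_i^2}^{\,p} \equiv 0\:[p]$. In the required range $p \gg x^{1/2+\epsilon}$, $p \ll x^{4/7-\epsilon}$ one has $Y = 4Np^2/x \ll x^{1/7}$, so $r_i \le \sqrt{Y^{1+\eta}/t} < p$ for $\eta$ small, and in particular $p \nmid r_i$. Squaring the defining relation then gives $\sqrt{\mu_\pm t r_i^2}^{\,p} \equiv \eta_i r_i \sqrt{\mu_\pm t}^{\,p}\:[p]$ for some $\eta_i \in \{\pm1\}$, and since $\sqrt{\mu_\pm t}^{\,p}$ is invertible modulo $p$, the congruence collapses to $\sum_{i=1}^4 \sigma_i r_i \equiv 0\:[p]$ for some $\sigma \in \{\pm 1\}^4$ depending on $\mathbf e$ and the $\eta_i$'s.

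Next I would insert the bound $|a_0(t r_i^2)| \ll_{f,\epsilon} t^{3/14}(t r_i)^\epsilon$ coming from Proposition~\ref{prop-boundcoeffgene} applied to $f_0$ (whose character $\bigl(\frac{4N}{\cdot}\bigr)$ is real since $N$ is odd and squarefree). Bounding $B$ trivially, this yields
\[
|R| \ll_{f,\epsilon} Y^{O(\epsilon)} \sum_{\substack{t \le Y^{1+\eta} \\ t \text{ squarefree}}} t^{6/7}\sum_{\sigma \in \{\pm 1\}^4}\#\Bigl\{(r_1,\ldots,r_4) \in [1,R_t]^4 : {\textstyle\sum_i} \sigma_i r_i \equiv 0\:[p]\Bigr\},
\]
where $R_t := \sqrt{Y^{1+\eta}/t}$. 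For each $\sigma$, fixing $r_2, r_3, r_4$ determines $r_1$ modulo $p$, giving $\ll R_t/p + 1$ admissible values of $r_1$ and hence $\ll R_t^3 + R_t^4/p$ quadruples per sign pattern.

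Summing over $t$, the $R_t^3$-contribution gives $Y^{3/2+O(\eta)} \sum_{t \le Y^{1+\eta}} t^{-9/14} \ll Y^{3/2+5/14+O(\eta+\epsilon)} = Y^{13/7+O(\eta+\epsilon)}$ (using $6/7 - 3/2 = -9/14$), while the $R_t^4/p$-contribution gives $(Y^{2}/p)\sum_{t \ge 1} t^{-8/7} \ll Y^{2+O(\eta)}/p$; the latter is $\ll Y^{13/7}$ as soon as $Y^{1/7} \ll p$, which holds with room to spare since $Y \ll x^{1/7}$ and $p \gg x^{1/2}$. Combining the two gives $|R| \ll Y^{2-1/7+O(\eta+\epsilon)}$, as required. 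The main obstacle is really only the first step: once the square roots are linearized, everything reduces to lattice-point bookkeeping. The exponent $1/7$ is essentially pinned down by the subconvex exponent $3/14$ from Proposition~\ref{prop-boundcoeffgene}, and any weakening there would directly shrink the upper bound $p \ll x^{4/7-\epsilon}$ allowed in Theorem~\ref{theo-lowboundT}.
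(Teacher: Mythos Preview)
Your proof is correct and follows essentially the same line as the paper's: linearize the square roots modulo $p$, apply Proposition~\ref{prop-boundcoeffgene} to extract the factor $t^{6/7}$, and count the remaining lattice points. The only cosmetic difference is that the paper invokes the discussion immediately preceding the proposition (where $2^{16}Y^{4(1+\eta)}<p/2$ forces $Q_4(\mathbf m)=0$) to upgrade your congruence $\sum_i \sigma_i r_i \equiv 0\:[p]$ to the equality $\sum_i e_i' r_i = 0$, so your $R_t^4/p$-term is in fact identically zero in the relevant range rather than merely negligible.
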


\begin{proof}
Following the previous discussion or simply by \cite[Lemma 6]{Darreye},
$$\sum_{\mathbf e\in\{\pm1\}^4}\delta_p\left(\sum_{i=1}^4 e_i\sqrt{\mu_\pm tr_i^2}^p\right)\ll 
\sum_{\mathbf e\in\{\pm1\}^4}\delta_0\left(\sum_{i=1}^4 e_i r_i\right)$$
so, since $B$ is bounded, it suffices to prove that
\begin{equation}
\label{defR'proof}
R':=\underset{t\le Y^{1+\eta}}{{\sum}^\flat} \sum_{\tiny \begin{array}{c}1\le tr_i^2\le Y^{1+\eta}\\
\sum_{i=1}^4 e_i r_i=0\end{array}}\prod_{i=1}^4 |a_0(tr_i^2)|\ll_{f,\epsilon}Y^{2-1/7+\epsilon}
\end{equation}
for any $\mathbf e\in\{\pm1\}^4$ and any $\epsilon>0$. Fix such $\mathbf e$ and $\epsilon$. For any $t$, the inner sum in $R'$
becomes 
$$\sum_{\tiny \begin{array}{c}1\le tr_i^2\le Y^{1+\eta}\\1\le i\le3\end{array}}
\left|a_0\left(t\left(\sum_{i=1}^3e_ir_i\right)^2\right)\right|\prod_{i=1}^3|a_0(tr_i^2)|\ll_{f,\epsilon}t^{6/7+\epsilon}\:
Y^{\epsilon(1+\eta)} \sum_{\tiny \begin{array}{c}1\le tr_i^2\le Y^{1+\eta}\\1\le i\le3\end{array}}
\prod_{i=1}^3r_i^\epsilon$$
by Proposition \ref{prop-boundcoeffgene}. Thus, for $\eta$  sufficiently small,
\begin{align*}
R'&\ll_{f,\epsilon}Y^{\epsilon(1+\eta)}\underset{t\le Y^{1+\eta}}{{\sum}^\flat}t^{6/7+\epsilon}
\left(\frac{Y^{1+\eta}}{t}\right)^{\frac{3}{2}(1+\epsilon)}\\
&\ll_{f,\epsilon}Y^{3/2+6\epsilon}\underset{t\le Y^{1+\eta}}{{\sum}^\flat}t^{5/14-1}\\
&\ll_{f,\epsilon}Y^{2-1/7+7\epsilon}.
\end{align*}

\end{proof}

To finish the proof of Theorem \ref{theo-moment24}, note that
$$\sum_{\tiny \begin{array}{c}1\le m_1, m_2\le Y^{1+\eta}\\
t_1= t_2, \legendre{m_i}=\pm1\end{array}} a_0(m_1)^2B^2\left(\frac{m_1}{Y}\right)a_0(m_2)^2B^2\left(\frac{m_2}{Y}\right)$$
is bounded by $R'$ defined in \eqref{defR'proof}  (with $\mathbf e=(1,-1,1,-1)$ for example) so we have from \eqref{defM4pm} 
and Proposition \ref{prop-boundR},

\begin{align*}
M_4^\pm&=\frac{12}{Y^{2}}\sum_{\tiny \begin{array}{c}1\le m_1, m_2\le Y^{1+\eta}\\
 \legendre{m_i}=\pm1\end{array}} a_0(m_1)^2B^2\left(\frac{m_1}{Y}\right)a_0(m_2)^2B^2\left(\frac{m_2}{Y}\right)
  +O_{f,\epsilon}\left(Y^{-1/7+\epsilon} +\frac{Y^{2}}{p}\right)\\
  &= 12\left(\frac{1}{Y}\sum_{\tiny \begin{array}{c}1\le m\le Y^{1+\eta}\\
 \legendre{m}=\pm1\end{array}} a_0(m)^2B^2\left(\frac{m}{Y}\right)\right)^{2} + 
 O_{f,\epsilon}\left(Y^{-1/7+\epsilon} +\frac{Y^{2}}{p}\right)
\end{align*}
for any $ \epsilon>0$. Recall that $x^{1/2+ \epsilon}\ll p\ll x^{4/7- \epsilon}$ for some $ \epsilon>0$ so the error term above is 
$o(1)$ as $x\to+\infty$. Hence 
$$|M_4^\pm|\le 12 \left(\frac{1}{Y}\sum_{1\le m\le Y^{1+\eta}} a_0(m)^2B^2\left(\frac{m}{Y}\right)\right)^{2} + o(1)$$
 and again, using \eqref{suma(n)2B}, we get the conclusion.

\vskip 1cm
\section{Proof of Theorem \ref{theo-lowboundT}}
\label{ProofofTheo}

We are now going to prove an analog of Theorem \ref{theo-lowboundT} for $\T_{a,q}^+(x,\alpha; w)$ defined in 
\eqref{defTsmooth}. This result will be even stronger than Theorem \ref{theo-lowboundT} since it counts the number
of positive coefficients $a(n)$ with $n/x$ in the support of $w$.

\vskip 0.5 cm
\subsection{Preliminary lemmas}

We first prove two elementary lemmas that we will use several times.

\begin{lemme}\label{lemma-elmt1}
Let $(b(n))_{n}$ be a sequence of real numbers such that $$ \sum_{n\le x}b(n)=o\left(\sum_{n\le x}|b(n)|\right)$$ 
as $x\to+\infty$. Put $${\sum}^{+}(x)= \!\!\sum_{\tiny\begin{array}{c}n\le x\\ b(n)>0\end{array}}b(n) \text{ \;\;and\;\; }
{\sum}^{-}(x)= -\!\!\sum_{\tiny\begin{array}{c}n\le x \\ b(n)<0\end{array}}b(n).$$

Then, $${\sum}^{\pm}(x)\sim \frac{1}{2}\sum_{n\le x}|b(n)|$$ as $x\to+\infty$. 

\end{lemme}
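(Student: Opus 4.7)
The plan is to exploit the two elementary identities
\[
\sum_{n \le x} b(n) = {\sum}^{+}(x) - {\sum}^{-}(x), \qquad \sum_{n \le x} |b(n)| = {\sum}^{+}(x) + {\sum}^{-}(x),
\]
which follow immediately from splitting the sums according to the sign of $b(n)$ (terms with $b(n)=0$ contribute zero to both sides). Solving this linear system, I get
\[
{\sum}^{\pm}(x) = \frac{1}{2} \sum_{n \le x} |b(n)| \pm \frac{1}{2} \sum_{n \le x} b(n).
\]

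Then I would simply invoke the hypothesis $\sum_{n \le x} b(n) = o\bigl(\sum_{n \le x} |b(n)|\bigr)$ to conclude that the second term on the right is negligible compared to the first as $x \to +\infty$, which gives ${\sum}^{\pm}(x) \sim \frac{1}{2}\sum_{n \le x}|b(n)|$. There is no real obstacle here; the statement is essentially a reformulation of the algebraic identity above. The only thing to keep in mind is that one tacitly needs $\sum_{n \le x} |b(n)|$ to tend to $+\infty$ (or at least not to vanish eventually) for the asymptotic to be meaningful, but this is already implicit in writing a non-trivial ``$o$'' relation.
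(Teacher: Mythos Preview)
Your proof is correct and follows essentially the same approach as the paper: both arguments reduce to the identity ${\sum}^{\pm}(x)=\tfrac{1}{2}\sum_{n\le x}|b(n)|\pm\tfrac{1}{2}\sum_{n\le x}b(n)$ and then apply the hypothesis. Your remark about needing $\sum_{n\le x}|b(n)|$ not to vanish is a reasonable observation; the paper leaves this implicit as well.
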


\begin{proof}
We have
\begin{align*}
{\sum}^{+}(x)&= \frac{1}{2}\left({\sum}^{+}(x) + {\sum}^{-}(x)\right) 
+ \frac{1}{2}\left({\sum}^{+}(x)-{\sum}^{-}(x)\right)\\
&= \frac{1}{2}\sum_{n\le x}|b(n)| + \frac{1}{2}\sum_{n\le x}b(n)\\
&\sim \frac{1}{2}\sum_{n\le x}|b(n)|
\end{align*}
by assumptions. The proof is the same for ${\sum}^{-}(x)$.

\end{proof}

\begin{lemme}\label{lemma-elmt2}
Let $X$ be a finite set of positive integers and for any $n\in X$, let $b(n)$ and $c(n)$ be two real numbers with $c(n)\ge0$.
Assume there exists $M>0$ and $V>0$ such that $$ \sum_{n\in X} c(n)\le M \le \sum_{n\in X} b(n) $$ and
$$ \sum_{n\in X} b(n)^2\le V.$$

Then, 
$$\left|\left\{n\in X \;\vline\; b(n)>c(n)\right\}\right| \ge \left(M-\sum_{n\in X} c(n)\right)^{2}V^{-1}$$
\end{lemme}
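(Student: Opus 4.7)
The plan is to set $Y = \{n \in X : b(n) > c(n)\}$ and produce both a lower bound and an upper bound for $\sum_{n \in Y} b(n)$, then combine them to isolate $|Y|$.

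For the lower bound, I would split the full sum as
\[
\sum_{n \in X} b(n) = \sum_{n \in Y} b(n) + \sum_{n \in X \setminus Y} b(n).
\]
By definition of $Y$, the complementary sum satisfies $b(n) \le c(n)$ for every $n \in X \setminus Y$, and since $c(n) \ge 0$ we can enlarge the range back to all of $X$, giving $\sum_{n \in X \setminus Y} b(n) \le \sum_{n \in X} c(n)$. Combined with the assumption $\sum_{n \in X} b(n) \ge M$, this yields
\[
\sum_{n \in Y} b(n) \ge M - \sum_{n \in X} c(n).
\]

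For the upper bound, I would apply the Cauchy--Schwarz inequality to $\sum_{n \in Y} b(n) = \sum_{n \in Y} 1 \cdot b(n)$, obtaining
\[
\sum_{n \in Y} b(n) \le |Y|^{1/2} \Bigl(\sum_{n \in Y} b(n)^2\Bigr)^{1/2} \le |Y|^{1/2} V^{1/2},
\]
where the second inequality follows from $\sum_{n \in Y} b(n)^2 \le \sum_{n \in X} b(n)^2 \le V$. Squaring and comparing with the lower bound then gives the claim. Since every step is a routine application of positivity and Cauchy--Schwarz, I do not anticipate any genuine obstacle; the only point that needs a little care is the use of $c(n) \ge 0$ to replace $\sum_{n \in X \setminus Y} c(n)$ by $\sum_{n \in X} c(n)$, which is exactly where this hypothesis is needed.
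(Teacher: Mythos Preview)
Your proposal is correct and follows exactly the same approach as the paper: split $\sum_{n\in X} b(n)$ according to whether $b(n)>c(n)$, bound the complementary part by $\sum_{n\in X} c(n)$ using $c(n)\ge 0$, and apply Cauchy--Schwarz to the remaining sum. The paper's proof is essentially identical, so there is nothing further to compare.
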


\begin{proof}
One has 

\begin{align*}
M&\le \sum_{\tiny\begin{array}{c}
n \in X \\ b(n)\le c(n)
\end{array}}b(n)
+ \sum_{\tiny\begin{array}{c}
n \in X \\ b(n)> c(n)
\end{array}}b(n)\\
&\le \sum_{n\in X} c(n) + \left(\sum_{\tiny\begin{array}{c}
n \in X \\ b(n)> c(n)
\end{array}}1\right)^{1/2}\left(\sum_{n\in X}b(n)^{2}\right)^{1/2}
\end{align*}
using Cauchy-Schwarz inequality in the second sum. 

Since $\sum\limits_{n\in X} c(n)\le M$ and 
$ \sum\limits_{n\in X} b(n)^2\le V$, the result follows easily.

\end{proof}

\vskip 0.5 cm
\subsection{Case where $f$ is arbitrary}
Fix $f$ as in Theorem \ref{theo-lowboundT} (but not necessarily an eigenform). For $x>0$ and a prime number $p$, we 
always assume that $ x^{1/2+\epsilon}\ll p\ll x^{4/7-\epsilon} $ for some fixed $ \epsilon>0$. Hence, if $x$ goes to infinity then so 
does $p$ but restricted in this range. We can first establish the following proposition.

\begin{prop}\label{prop-lowboundE(xpa)}
If $0<m< \frac{\| w\|_{2}\sqrt{c_{f}}}{4\sqrt 3}$ then
$$\big|\big\{a\:[p] \;\big|\; E(x,p,a)>m \big\}\big| \ge \left(\frac{1}{4\sqrt 3} - \frac{m}{\| w\|_{2}\sqrt{c_{f}}}\right)^{2}p 
+o(p)$$
as $x\to+\infty$.
\end{prop}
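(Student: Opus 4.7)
The plan is to pass from the second- and fourth-moment control of $E(x,p,a)$ (provided by Theorem \ref{theo-moment24}) to a lower bound on its first absolute moment by H\"older interpolation, then split this into a lower bound on the positive part $E^{+}=\max(E,0)$ using the near-vanishing of the signed mean, and finally count the desired residues via Lemma \ref{lemma-elmt2} with the constant threshold $c(a)=m$.

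First, I would apply H\"older's inequality with exponents $3/2$ and $3$ to the factorization $|E|^{2}=|E|^{2/3}\cdot|E|^{4/3}$, which yields
\[
{\sum_{a\:[p]}}^{\times}|E(x,p,a)|^{2}\le\Bigl({\sum_{a\:[p]}}^{\times}|E(x,p,a)|\Bigr)^{2/3}\Bigl({\sum_{a\:[p]}}^{\times}E(x,p,a)^{4}\Bigr)^{1/3}.
\]
Inserting the two asymptotics from Theorem \ref{theo-moment24} then gives $\frac{1}{p}{\sum_{a\:[p]}}^{\times}|E(x,p,a)|\ge\frac{\sqrt{c_{f}}\,\|w\|_{2}}{2\sqrt{3}}+o(1)$.

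To split this into sign parts, one needs ${\sum_{a\:[p]}}^{\times}E(x,p,a)=o(p)$. The full sum $\sum_{a\:[p]}E(x,p,a)$ equals $(x/p)^{-1/2}\sum_{n\ge1}a(n)w(n/x)=O_{A}(x^{-A})$ by Mellin inversion (already exploited in Section \ref{Mainestimates}, using that $L(s,f)$ is entire since $f$ is cuspidal), and removing the single class $a\equiv 0$ costs $|E(x,p,0)|\ll x^{-1/2-\epsilon}p^{3/2+2\epsilon}$ by Lemma \ref{lemme-suma(n)}; both quantities are $o(p)$ in the range $x^{1/2+\epsilon}\ll p\ll x^{4/7-\epsilon}$. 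Identifying the invertible residues with a subset of $\{1,\dots,p-1\}$ and writing $E^{\pm}=\max(\pm E,0)$, Lemma \ref{lemma-elmt1} applied to $a\mapsto E(x,p,a)$ then gives
\[
{\sum_{a\:[p]}}^{\times}E^{+}(x,p,a)\sim\tfrac{1}{2}{\sum_{a\:[p]}}^{\times}|E(x,p,a)|\ge\tfrac{\sqrt{c_{f}}\|w\|_{2}}{4\sqrt{3}}\,p+o(p).
\]

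Finally, I would invoke Lemma \ref{lemma-elmt2} with $X=\{1\le a\le p-1:(a,p)=1,\ E(x,p,a)>0\}$, $b(a)=E(x,p,a)$, $c(a)=m$, $M=\sum_{X}b(a)$, and any $V\ge\sum_{X}b(a)^{2}$; the second moment of Theorem \ref{theo-moment24} allows the choice $V=c_{f}\|w\|_{2}^{2}p+o(p)$. The strict hypothesis $m<\|w\|_{2}\sqrt{c_{f}}/(4\sqrt{3})$ combined with $|X|\le p$ forces $\sum_{X}c(a)=m|X|\le mp\le M$ for $x$ large, and the lemma then produces
\[
\bigl|\{a\:[p]:E(x,p,a)>m\}\bigr|\ge\frac{(M-m|X|)^{2}}{V}\ge\frac{(M-mp)^{2}}{V}=\Bigl(\frac{1}{4\sqrt{3}}-\frac{m}{\|w\|_{2}\sqrt{c_{f}}}\Bigr)^{2}p+o(p),
\]
as announced. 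The heart of the argument is the H\"older interpolation step; the only delicate point to verify is that the signed mean ${\sum_{a\:[p]}}^{\times}E(x,p,a)$ is genuinely $o(p)$ uniformly in the prescribed range of $p$, which is automatic thanks to the restriction $p\ll x^{4/7-\epsilon}$.
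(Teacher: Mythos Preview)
Your proposal is correct and follows essentially the same route as the paper's proof: H\"older interpolation between the second and fourth moments from Theorem \ref{theo-moment24} to bound $\frac{1}{p}{\sum}^{\times}|E|$ below, the near-vanishing of the signed mean via Lemma \ref{lemme-suma(n)} feeding into Lemma \ref{lemma-elmt1}, and then Lemma \ref{lemma-elmt2} with the constant threshold $m$. The only cosmetic difference is that you separate the full sum $\sum_{a\:[p]}E(x,p,a)$ into the Mellin-inversion piece and the class $a\equiv 0$ explicitly, whereas the paper writes both at once; the resulting bound $O(p^{1-\delta})$ is the same.
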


\begin{proof}
By H\"older's inequality, we have
$$\frac{1}{p} {\sum_{a\:[p]}}^{\times}E(x,p,a)^{2}\le \left(\frac{1}{p} {\sum_{a\:[p]}}^{\times}|E(x,p,a)|\right)^{2/3}
\left(\frac{1}{p} {\sum_{a\:[p]}}^{\times}E(x,p,a)^{4}\right)^{1/3} $$
so using Theorem \ref{theo-moment24}, we get
$$c_{f}\| w\|_{2}^{2}+o(1)\le \left(\frac{1}{p} {\sum_{a\:[p]}}^{\times}|E(x,p,a)|\right)^{2/3}
\left(12 (c_{f}\| w\|_{2}^{2})^{2}+o(1)\right)^{1/3}$$
and then
$${\sum_{a\:[p]}}^{\times}|E(x,p,a)|\ge \frac{\| w\|_{2}\sqrt{c_{f}}}{2\sqrt 3}p + o(p).$$

Also, by Lemma \ref{lemme-suma(n)},
$${\sum_{a\:[p]}}^{\times}E(x,p,a) = \frac{1}{\sqrt{x/p}}\sum_{n\ge 1}a(n)w(n/x)-
\frac{1}{\sqrt{x/p}}\sum_{n=0\:[p]}a(n)w(n/x)=O_{f}(x^{-1/2}p^{3/2+\epsilon})=O_{f}(p^{1-\delta})$$
for some $\delta>0$ because $p\ll x^{4/7-\epsilon}$.

Thus, Lemma \ref{lemma-elmt1} yields
\begin{equation}
\label{sumE(xpa)+}
{\sum_{a\:[p]}}^{+}E(x,p,a)\ge \frac{\| w\|_{2}\sqrt{c_{f}}}{4\sqrt 3}p + o(p)
\end{equation}
where ${\sum\limits_{a\:[p]}}^{+}$ means that we restrict the sum to invertible classes  $a\:[p]$ such that 
$E(x,p,a)> 0$. 

Now, use Lemma \ref{lemma-elmt2} with $X=\{0<a<p \;\vline\; E(x,p,a)> 0\}$ and, for
$a\in X$, with $b(a)=E(x,p,a)$ and $c(a)=m <\frac{\| w\|_{2}\sqrt{c_{f}}}{4\sqrt 3}$. By \eqref{sumE(xpa)+} and 
Theorem \ref{theo-moment24}, we obtain
\begin{align*}
\big|\big\{a\:[p] \;\big|\; E(x,p,a)>m \big\}\big| &\ge  \left(\frac{\| w\|_{2}\sqrt{c_{f}}}{4\sqrt 3}p -mp +o(p)\right)^{2}
\big(c_{f}\| w\|_{2}^{2}p + o(p)\big)^{-1}\\
&\ge \left(\frac{1}{4\sqrt 3} - \frac{m}{\| w\|_{2}\sqrt{c_{f}}}\right)^{2}p +o(p).
\end{align*}
\end{proof}

Proposition \ref{prop-lowboundE(xpa)} allows us to give a lower bound for $ \sum\limits_{n=a\:[p]}a(n)w(n/x)$ 
for a certain number of 
$a\:[p]$. We are now going to upper bound $ \sum\limits_{n=a\:[p]}a(n)^{2}w(n/x)^{2}$ for a large number of $a\:[p]$ in order
to apply Lemma \ref{lemma-elmt2} once again.

\begin{prop}\label{prop-upboundvar}
Let $m>0$. Then
$$\big|\big\{a\:[p] \;\big|\; \sum\limits_{n=a\:[p]}a(n)^{2}w(n/x)^{2}>mx/p \big\}\big| \le 
\Big(\frac{c_{f}\| w\|_{2}^{2}}{m}+o(1)\Big)p .$$
\end{prop}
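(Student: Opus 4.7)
The plan is to exploit a Markov-type inequality based on the total (unrestricted) second moment of the coefficients. First I would sum the quantity $\sum_{n=a\:[p]} a(n)^2 w(n/x)^2$ over all classes $a\:[p]$: by partitioning $\mathbb{Z}_{\ge 1}$ according to residue modulo $p$, one gets the identity
$$\sum_{a\:[p]} \sum_{n=a\:[p]} a(n)^2 w(n/x)^2 = \sum_{n\ge 1} a(n)^2 w(n/x)^2.$$

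Next, I would invoke Lemma \ref{lemme-suma(n)2}, which shows that the right-hand side is asymptotically $c_f \|w\|_2^2 \, x + o(x)$ as $x\to+\infty$. Since every summand $S_a := \sum_{n=a\:[p]} a(n)^2 w(n/x)^2$ is non-negative, a trivial Markov-type argument applies: if $N$ classes $a\:[p]$ satisfy $S_a > mx/p$, then
$$N \cdot \frac{mx}{p} < \sum_{a\:[p]} S_a = c_f \|w\|_2^2 \, x + o(x),$$
which rearranges to $N \le \bigl( c_f \|w\|_2^2 / m + o(1) \bigr) p$, exactly the claimed bound.

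There is no real obstacle here; the content of the proposition is just combining the Rankin–Selberg asymptotic from Lemma \ref{lemme-suma(n)2} with Markov's inequality applied to the fibres of the projection $n \mapsto n \bmod p$. The only thing worth being mindful of is that the error term $o(x)$ coming from Lemma \ref{lemme-suma(n)2} translates, after dividing by $mx/p$, into the $o(1) \cdot p$ term in the statement, which is consistent with the range of $p$ relative to $x$ being used throughout the section.
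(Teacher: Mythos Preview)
Your proof is correct and follows exactly the approach indicated in the paper, which simply cites Markov's inequality together with Lemma~\ref{lemme-suma(n)2}. There is nothing to add.
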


\begin{proof}
This is a straightforward consequence Markov's inequality and Lemma \ref{lemme-suma(n)2}.

\end{proof}

Now, let us prove the main result of this subsection.

\begin{theo}\label{theo-lowboundsmooth}
Let $f$, $x$ and $p$ be as above. For any $a\:[p]$, define $\T_{a,p}^+(x,\alpha; w)$ as in \eqref{defTsmooth}. 
Let $\alpha\in (3/14,1/4]$ and $r < 1/48$. Then, for $x$ large enough
$$\left|\left\{a\:[p] \;\vline\; \T_{a,p}^+(x,\alpha; w)\ge 1\right\}\right| \ge  rp $$
as long as $x^{1-2\alpha+ \epsilon}\ll p\ll  x^{4/7- \epsilon}$ for some $ \epsilon>0$.
\end{theo}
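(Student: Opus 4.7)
My strategy is to contrast the lower bound on $E(x,p,a)$ for a positive proportion of $a\,[p]$ given by Proposition \ref{prop-lowboundE(xpa)} against the observation that a class $a$ with $\T_{a,p}^+(x,\alpha;w)=0$ is forced to have $E(x,p,a)$ very small, in the parameter range where $p\gg x^{1-2\alpha+\epsilon}$. First I would run the contrapositive: if $\T_{a,p}^+(x,\alpha;w)=0$ then by definition $a(n)w(n/x)\le n^{-\alpha}w(n/x)$ for every $n\equiv a\,[p]$, and since $w\ge 0$ we may sum to obtain
\[
\sum_{n\equiv a\,[p]}a(n)w(n/x)\le \sum_{n\equiv a\,[p]}n^{-\alpha}w(n/x).
\]
Lemma \ref{lemme-sumnalpha} bounds the right-hand side by $O(x^{1-\alpha}/p)$, and dividing by $\sqrt{x/p}$ gives
\[
E(x,p,a)\ll x^{1/2-\alpha}p^{-1/2}.
\]

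Next I would feed in the hypothesis $p\gg x^{1-2\alpha+\epsilon}$: this yields $p^{1/2}\gg x^{1/2-\alpha+\epsilon/2}$ and hence $E(x,p,a)\ll x^{-\epsilon/2}=o(1)$. Consequently, for any fixed $m>0$ and $x$ large enough (depending on $f$, $w$, $\epsilon$, $\alpha$ and $m$), every class $a\,[p]$ with $E(x,p,a)>m$ must satisfy $\T_{a,p}^+(x,\alpha;w)\ge 1$. I would then apply Proposition \ref{prop-lowboundE(xpa)} with any $m\in\bigl(0,\|w\|_2\sqrt{c_f}/(4\sqrt 3)\bigr)$ to deduce
\[
\big|\{a\,[p]\,:\,\T_{a,p}^+(x,\alpha;w)\ge 1\}\big|\ge \big|\{a\,[p]\,:\,E(x,p,a)>m\}\big|\ge \Bigl(\tfrac{1}{4\sqrt 3}-\tfrac{m}{\|w\|_2\sqrt{c_f}}\Bigr)^{2}p+o(p).
\]
Letting $m\to 0^+$, the coefficient in front of $p$ can be made arbitrarily close to $(4\sqrt 3)^{-2}=1/48$, so for any given $r<1/48$ a suitable choice of $m$ and $x_0$ yields the claimed lower bound $rp$.

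The plan relies only on machinery already established in the excerpt; the only point demanding care is the compatibility of the two hypotheses on $p$. Proposition \ref{prop-lowboundE(xpa)} is valid in the range $x^{1/2+\epsilon}\ll p\ll x^{4/7-\epsilon}$ required by Theorem \ref{theo-moment24}, while the contrapositive step requires $p\gg x^{1-2\alpha+\epsilon}$. Since we restrict to $\alpha\le 1/4$, the latter is at least as strong as the former, so the stated range $x^{1-2\alpha+\epsilon}\ll p\ll x^{4/7-\epsilon}$ is exactly what is needed; the constraint $\alpha>3/14$ is nothing but the compatibility $1-2\alpha<4/7$ required for this range to be non-empty. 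I anticipate no serious obstacle beyond bookkeeping of the dependencies of the implied constants on $f$, $w$, $\alpha$, $\epsilon$ and $m$.
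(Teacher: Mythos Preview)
Your argument is correct and in fact cleaner than the paper's. Both proofs invoke Proposition~\ref{prop-lowboundE(xpa)} together with the trivial bound of Lemma~\ref{lemme-sumnalpha}, and both handle the range of $p$ the same way. The difference is in how one passes from ``$E(x,p,a)$ is large'' to ``$\T_{a,p}^+(x,\alpha;w)\ge 1$''. The paper brings in an additional ingredient, Proposition~\ref{prop-upboundvar}, to control the second moment $\sum_{n\equiv a[p]}a(n)^2w(n/x)^2$ for most classes $a$, and then feeds the pair of bounds into the Cauchy--Schwarz counting Lemma~\ref{lemma-elmt2} to conclude $\T_{a,p}^+(x,\alpha;w)>0$. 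You instead observe the contrapositive directly: if no term in the progression satisfies $a(n)w(n/x)>n^{-\alpha}w(n/x)$, then the whole sum is bounded above by $\sum n^{-\alpha}w(n/x)=o(\sqrt{x/p})$, forcing $E(x,p,a)=o(1)$ uniformly in $a$. This bypasses both Proposition~\ref{prop-upboundvar} and Lemma~\ref{lemma-elmt2}. The paper's route is set up as if it might yield a quantitative lower bound $\T_{a,p}^+\ge m_1^2/m_2$, but as the authors themselves remark immediately after the proof, the constraints on $m_1,m_2$ force this ratio below $1$ anyway, so nothing is lost by your shortcut.
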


\begin{proof}

Let $m_{1}>0$ and $m_{2}>0$ such that 
$$\sqrt r < \frac{1}{4\sqrt 3} - \frac{m_{1}}{\| w\|_{2}\sqrt{c_{f}}}$$ and
$$\frac{c_{f}\| w\|_{2}^{2}}{m_{2}}< \left(\frac{1}{4\sqrt 3} - \frac{m_{1}}{\| w\|_{2}\sqrt{c_{f}}}\right)^{2} -r .$$

Apply Propositions \ref{prop-lowboundE(xpa)} and \ref{prop-upboundvar} to see that 
\begin{equation}
\label{conditionproofmaintheo}
\sum_{n=a\:[p]}a(n)w(n/x) \ge m_{1}\sqrt{x/p} \text{ \;\;and\;\; } \sum_{n=a\:[p]}a(n)^{2}w(n/x)^{2} \le m_{2}x/p 
\end{equation}
for a certain number of invertible $a\:[p]$ greater than $rp$  for $p$ large enough \emph{i.e.} $x$ large enough.

Also, by Lemma \ref{lemme-sumnalpha}, 
$$\sum_{n=a\:[p]}n^{-\alpha}w(n/x)\ll \frac{x^{1-\alpha}}{p}.$$ The right-hand side of the above
inequality is $o\left(\sqrt{x/p}\right)$ because $\frac{x^{1-\alpha}}{p}= \frac{x^{1/2-\alpha}}{p^{1/2}}\sqrt{x/p}$ 
and $x^{1-2\alpha+ \epsilon}\ll p$.

Hence, for these invertible $a\:[p]$ satisfying \eqref{conditionproofmaintheo}, using Lemma \ref{lemma-elmt2} with \\
$X=\{n=a\:[p]\;|\; w(n/x)\neq 0\}$, $b(n)=a(n)w(n/x)$ and  $c(n)=n^{-\alpha}w(n/x)$, we have for $x$ large enough,
\begin{align*}
\T_{a,p}^+(x,\alpha; w)&\ge \frac{1}{m_{2}x/p}\left(m_{1}\sqrt{x/p} + o\left(\sqrt{x/p}\right)\right)^{2} > 0
\end{align*}
and since $\T_{a,p}^+(x,\alpha; w)$ is an integer, we get the result.

\end{proof}

Theorem \ref{theo-lowboundsmooth} easily implies the first assertion of Theorem \ref{theo-lowboundT}. Unfortunately, the lower
bound for $\T_{a,p}^+(x,\alpha)$ cannot be improved with our method since it only gives 
$$\T_{a,p}^+(x,\alpha)\ge \frac{m_{1}^{2}}{m_{2}}= \frac{m_{1}^{2}}{c_{f}\|w\|_{2}} \frac{c_{f}\|w\|_{2}}{m_{2}}$$ with
$\frac{m_{1}^{2}}{c_{f}\|w\|_{2}}$ and $\frac{c_{f}\|w\|_{2}}{m_{2}}$ both less than $\frac{1}{4\sqrt 3}$ so the right-hand side of the
 above inequality cannot be greater than one.
 
 We also deduce Corollary \ref{cor-sign} from Theorem \ref{theo-lowboundsmooth}.
 
 \begin{proof}[Proof of Corollary \ref{cor-sign}]
 Let $ \epsilon>0$ and $x>0$. For $x$ large enough, there always exists a prime $p$ in the interval 
 $[x^{4/7-2 \epsilon},x^{4/7- \epsilon}]$ by Bertrand's postulate. Then, applying Theorem \ref{theo-lowboundsmooth} with 
 $\alpha=3/14+2 \epsilon$ and $ \epsilon$ small enough, we see that the number of $n\in[1,x]$ such that $a(n)>n^{-\alpha}$ is 
 greater than $rp\ge rx^{4/7-2 \epsilon}$ for fixed $r<1/48$.

 \end{proof}
 
 We now turn our attention to the second assertion of Theorem \ref{theo-lowboundT}, that we will prove using the same technics 
 as previously.

\vskip 0.5 cm
\subsection{Case where $f$ is a complete eigenform} From now on, assume that $f$ is a complete eigenform and that
$x$ and $p$ still satisfy $x^{1/2+ \epsilon}\ll  p\ll x^{4/7- \epsilon}$ for some $ \epsilon>0$. We start by proving the following 
proposition.

\begin{prop}\label{prop-lowboundvar} For $m>0$ and $ \delta>0$, put 
\begin{equation}
\label{defA(xpm)}
\A(x,p,m, \delta)=\left\{a\:[p] \;\vline\;  \begin{array}{c}\sum\limits_{n=a\:[p]}a(n)^{2}w(n/x)^{2}> mx/p\\ 
\sum\limits_{n=a\:[p]}a(n)^{4}w(n/x)^{4}\le \frac{x^{1+ \delta}}{\sqrt p}\end{array}\right\}.
\end{equation}

Then, for $m$ sufficiently small and $x$ large enough, one has
$$\left|\A(x,p,m, \delta)\right| \gg_{f, \delta} x^{-\delta/2} p^{3/4}.$$
\end{prop}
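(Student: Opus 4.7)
The plan is to combine Lemma \ref{lemme-suma(n)2} with Proposition \ref{prop-4moment} and two applications of Cauchy--Schwarz, one per residue class and one across classes. Write $S_2(a) = \sum_{n=a\:[p]} a(n)^2 w(n/x)^2$ and $S_4(a) = \sum_{n=a\:[p]} a(n)^4 w(n/x)^4$, so that $\A(x,p,m,\delta) = \{a\:[p] : S_2(a) > mx/p\} \setminus \B$, where $\B = \{a\:[p] : S_4(a) > x^{1+\delta}/\sqrt{p}\}$. Since $\sum_{a\:[p]} S_4(a) = \sum_n a(n)^4 w(n/x)^4 \le \sum_{n \le x} a(n)^4 \ll_{f,\epsilon} x^{1+\epsilon}$ by Proposition \ref{prop-4moment}, Markov's inequality yields $|\B| \ll \sqrt{p}\,x^{\epsilon-\delta}$.

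The crux is the following per-class bound: for every $a \notin \B$, Cauchy--Schwarz on the $\asymp x/p$ terms in $S_2(a)$ gives
\[ S_2(a) \le \sqrt{|\{n \le x : n = a\:[p]\}|}\,\sqrt{S_4(a)} \ll \sqrt{x/p}\cdot\sqrt{x^{1+\delta}/\sqrt{p}} = \frac{x^{1+\delta/2}}{p^{3/4}}, \]
which is precisely the upper bound on $S_2(a)$ responsible for the exponent $3/4$ in the conclusion. Consequently $\sum_{a \in \A} S_2(a) \le |\A|\,x^{1+\delta/2}/p^{3/4}$, and the problem reduces to proving $\sum_{a \in \A} S_2(a) \gg x$.

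For the lower bound, I split
\[ \sum_{a \in \A} S_2(a) \ge \sum_{a\:[p]} S_2(a) - \sum_{a : S_2(a) \le mx/p} S_2(a) - \sum_{a \in \B} S_2(a). \]
By Lemma \ref{lemme-suma(n)2}, the first sum is $\sim c_f \|w\|_2^2 x$; the second is trivially $\le mx$; for the third, Cauchy--Schwarz across classes and the same per-class Cauchy--Schwarz yield
\[ \sum_{a \in \B} S_2(a) \le \sqrt{|\B|}\,\sqrt{\sum_{a\:[p]} S_2(a)^2} \ll \sqrt{\sqrt{p}\,x^{\epsilon-\delta}}\,\sqrt{x^{2+\epsilon}/p} = \frac{x^{1+\epsilon-\delta/2}}{p^{1/4}}, \]
since $\sum_{a\:[p]} S_2(a)^2 \le (x/p+1) \sum_{a\:[p]} S_4(a) \ll x^{2+\epsilon}/p$ by Proposition \ref{prop-4moment}. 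In the range $p \gg x^{1/2+\epsilon'}$ and for $\delta$ small, the last quantity is $o(x)$, so $\sum_{a \in \A} S_2(a) \gg x$ for $m$ sufficiently small; dividing by the pointwise bound $S_2(a) \ll x^{1+\delta/2}/p^{3/4}$ valid on $\A$ then gives $|\A| \gg p^{3/4}/x^{\delta/2}$.

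The only real technical point is the bookkeeping of exponents ensuring that $\sum_{a \in \B} S_2(a) = o(x)$; this is where the assumption $p \gg x^{1/2+\epsilon'}$ is used, in combination with the global fourth-moment estimate of Proposition \ref{prop-4moment}. Everything else is essentially a Markov + Cauchy--Schwarz argument grafted onto the second-moment main term coming from Rankin--Selberg theory.
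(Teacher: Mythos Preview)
Your proof is correct and follows essentially the same architecture as the paper's: Markov for $|\B|$, the per-class Cauchy--Schwarz bound $S_2(a)\ll x^{1+\delta/2}/p^{3/4}$ on $\B^c$, and then a lower bound on $\sum_{a\in\A}S_2(a)$ via Lemma~\ref{lemme-suma(n)2}. The only difference is how the contribution of $\B$ is disposed of. The paper simply notes that for \emph{every} $a$ one has, by Cauchy--Schwarz and Proposition~\ref{prop-4moment},
\[
S_2(a)\le\sqrt{x/p}\,\Big(\sum_{n\le x}|a(n)|^4\Big)^{1/2}\ll_{f,\delta_1}\frac{x^{1+\delta_1}}{\sqrt p},
\]
and multiplies by $|\B|\ll\sqrt{p}\,x^{-\delta_2}$ (any $\delta_1<\delta_2<\delta$) to get $\sum_{a\in\B}S_2(a)\ll x^{1+\delta_1-\delta_2}=o(x)$; this is shorter than your second Cauchy--Schwarz across classes and makes it transparent that no lower bound on $p$ is required at this step. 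Indeed, your own estimate $x^{1+\epsilon-\delta/2}/p^{1/4}$ is already $o(x)$ once $\epsilon<\delta/2$, so neither the hypothesis $p\gg x^{1/2+\epsilon'}$ nor smallness of $\delta$ is actually used.
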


\begin{proof}
First note that, by Cauchy-Schwarz inequality, 
$$\sum_{n=a\:[p]}a(n)^{2}w(n/x)^{2}\le \sqrt{x/p}\left(\sum_{n=a\:[p]}a(n)^{4}w(n/x)^{4}\right)^{1/2}$$
since $w$ is compactly supported in $(0,1)$. It is also $[0,1]$-valued, so using Proposition \ref{prop-4moment}, one gets
$$\sum_{n=a\:[p]}a(n)^{2}w(n/x)^{2}\ll_{f, \delta_{1}} \frac{x^{1+\delta_{1}}}{\sqrt p}$$ for any $ \delta_{1}>0$ and any $a\:[p]$. 
However, if $a\in\A(x,p,m, \delta)$ then we even have
$$\sum_{n=a\:[p]}a(n)^{2}w(n/x)^{2}\ll_{f, \delta} \frac{x^{1+\delta/2}}{p^{3/4}}.$$

By Markov's inequality and Proposition \ref{prop-4moment}, we also have that
$$\left|\left\{a\:[p] \;\vline\;\sum_{n=a\:[p]}a(n)^{4}w(n/x)^{4}> \frac{x^{1+ \delta}}{\sqrt p}\right\}\right|\le \frac{\sqrt p}{x^{\delta_{2}}}$$
for any $0<\delta_{2}<\delta$.

Therefore, using Lemma \ref{lemme-suma(n)2},
$$x\ll_{f} \sum_{a\not\in\A(x,p,m, \delta)}\sum_{n=a\:[p]}a(n)^{2}w(n/x)^{2}
+ \sum_{a\in\A(x,p,m, \delta)}\sum_{n=a\:[p]}a(n)^{2}w(n/x)^{2}$$
and splitting the first sum according to $\sum\limits_{n=a\:[p]}a(n)^{2}w(n/x)^{2}\le mx/p$ or not, we get
$$x\ll_{f, \delta,\delta_{1}} mx + \frac{x^{1+\delta_{1}}}{\sqrt p} \frac{\sqrt p}{x^{\delta_{2}}}+
\frac{x^{1+\delta/2}}{p^{3/4}}\left|\A(x,p,m, \delta)\right|$$
and the result follows by choosing $\delta_{1}<\delta_{2}$ and $m$ small enough.

\end{proof}

We will prove that for most $a\in\A(x,p,m, \delta)$, the coefficients $a(n)$'s with $n=a\:[p]$ have a certain number of positive and 
negative signs. To do  so, we need to bound the number of $a\:[p]$ such that $\left|\sum\limits_{n=a\:[p]}a(n)w(n/x)\right|$ or
$\sum\limits_{n=a\:[p]}a(n)^{2}w(n/x)^{2}$ is too big.

\begin{prop}\label{prop-boundB(xpm)}
For  $ \delta>0$, put 
\begin{equation}
\label{defB(xpm)}
\B(x,p, \delta)=\left\{a\:[p] \;\vline\;  \left|\sum\limits_{n=a\:[p]}a(n)w(n/x)\right|>\frac{x^{1- \delta}}{p^{5/4}} \text{ or }
\sum\limits_{n=a\:[p]}a(n)^{2}w(n/x)^{2}> \frac{x^{1+ \delta}}{p^{3/4}}\right\}.
\end{equation}

Then, for $m>0$ and $\delta>0$ small enough,
$$\left|\B(x,p, \delta)\right| = o\left(\left|\A(x,p,m, \delta\right)\right|)$$
as long as $x^{1/2+ \epsilon}\ll  p\ll x^{4/7- \epsilon}$ for some $ \epsilon>0$
\end{prop}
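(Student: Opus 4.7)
The plan is to write $\B(x,p,\delta) = \B_1\cup \B_2$ according to which of the two defining conditions fails, bound the cardinality of each piece by Markov's inequality using a suitable moment, and then compare with the lower bound $|\A(x,p,m,\delta)|\gg_{f,\delta} x^{-\delta/2}p^{3/4}$ of Proposition \ref{prop-lowboundvar}.

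The second-moment part $\B_2$ is the easier one. Summing trivially over $a\:[p]$ gives
$$\sum_{a\:[p]}\sum_{n=a\:[p]}a(n)^2 w(n/x)^2=\sum_{n\ge 1}a(n)^2 w(n/x)^2\ll_f x$$
by Lemma \ref{lemme-suma(n)2}. Markov's inequality then yields $|\B_2|\ll_f p^{3/4}/x^{\delta}$, which is negligible compared to $x^{-\delta/2}p^{3/4}$ since the ratio is $x^{-\delta/2}$.

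For $\B_1$, I would translate the condition $\bigl|\sum_{n=a\:[p]}a(n)w(n/x)\bigr|>x^{1-\delta}/p^{5/4}$ into the equivalent inequality $E(x,p,a)^4>x^{2-4\delta}/p^3$ using the normalization $E(x,p,a)=(x/p)^{-1/2}\sum_{n=a\:[p]}a(n)w(n/x)$. The fourth-moment bound in Theorem \ref{theo-moment24} gives $\sum_{a\:[p]}^{\times}E(x,p,a)^4\ll_f p$, so Markov produces $|\B_1|\ll_f p^4/x^{2-4\delta}$ (the single class $a\equiv 0\:[p]$ can be controlled separately by Lemma \ref{lemme-suma(n)}). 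Dividing by the target size we get
$$\frac{|\B_1|}{|\A(x,p,m,\delta)|}\ll_{f,\delta}\frac{p^{13/4}}{x^{2-9\delta/2}},$$
and the hypothesis $p\ll x^{4/7-\epsilon}$ forces $p^{13/4}\ll x^{13/7-13\epsilon/4}$, so the quotient is $\ll x^{-1/7-13\epsilon/4+9\delta/2}\to 0$ provided that $\delta$ is small compared to $\epsilon$. Combining the two bounds yields $|\B(x,p,\delta)|=o(|\A(x,p,m,\delta)|)$.

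The only real subtlety is that Theorem \ref{theo-moment24}'s fourth-moment estimate is precisely sharp enough to exploit the range $p\ll x^{4/7-\epsilon}$: the exponent $4/7$ corresponds exactly to $13/4\cdot 4/7=13/7<2$, so one must keep $\delta$ smaller than a quantity depending on $\epsilon$. Apart from this numerology there is no serious obstacle; the proof is a bookkeeping exercise combining the moment estimates already proved with Markov's inequality.
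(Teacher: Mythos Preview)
Your argument is correct, but for the set $\B_1$ you take a different route from the paper. The paper bounds $\B_1$ using Chebyshev's inequality with the \emph{second} moment from Theorem~\ref{theo-moment24}: since $\sum_{a\:[p]}^{\times}\big|\sum_{n=a\:[p]}a(n)w(n/x)\big|^2=(x/p)\sum_{a\:[p]}^{\times}E(x,p,a)^2\sim c_f\|w\|_2^2\,x$, one gets $|\B_1|\ll_f p^{5/2}/x^{1-2\delta}$, whence $|\B_1|/|\A|\ll_{f,\delta} p^{7/4}/x^{1-5\delta/2}\ll x^{-7\epsilon/4+5\delta/2}$. This is where the exponent $4/7$ is genuinely tight (since $7/4\cdot 4/7=1$) and where $\delta$ must be taken small relative to $\epsilon$. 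Your fourth-moment approach gives the sharper bound $|\B_1|\ll_f p^{4}/x^{2-4\delta}$, and the resulting ratio $\ll x^{-1/7-13\epsilon/4+9\delta/2}$ has an extra $x^{-1/7}$ of slack; so contrary to your closing remark, your version does \emph{not} require $\delta$ to shrink with $\epsilon$ --- any $\delta<2/63$ already works. In short, the paper's route is slightly more elementary (it only needs the second-moment asymptotic, available in the wider range $p\ll x^{1-\epsilon}$), while yours is numerically stronger. Your treatment of $\B_2$ matches the paper's.
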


\begin{proof}
By Chebychev's inequality and Theorem \ref{theo-moment24}, the number of $a\:[p]$ such that
$$\left|\sum_{n=a\:[p]}a(n)w(n/x)\right|>\frac{x^{1- \delta}}{p^{5/4}}$$ is less than
$$ \frac{(c_{f}\|w\|_{2}^{2}+o(1))x}{x^{2-2\delta}p^{-5/2}}\ll_{f}\frac{p^{5/2}}{x^{1-2\delta}}
= x^{-\delta/2} p^{3/4}\frac{p^{7/4}}{x^{1-5\delta/2}}$$
and $\frac{p^{7/4}}{x^{1-5\delta/2}}=o(1)$ for $\delta$ small enough since $p\ll x^{4/7- \epsilon}$.

Similarly, by Markov's inequality and Lemma \ref{lemme-suma(n)2}, the number of $a\:[p]$ such that
$$\sum_{n=a\:[p]}a(n)^{2}w(n/x)^{2}> \frac{x^{1+ \delta}}{p^{3/4}}$$ is less than
$$ \frac{(c_{f}\|w\|_{2}^{2}+o(1))x}{x^{1+\delta}p^{-3/4}}\ll_{f}\frac{p^{3/4}}{x^{\delta}}$$
which is $o(x^{-\delta/2} p^{3/4})$.

\end{proof}

As previously, when $a\in\A(x,p,m, \delta)$, we use H\"older's inequality to give a lower bound on 
$\sum\limits_{n=a\:[p]}|a(n)|w(n/x)$.

\begin{lemme}\label{lemme-lowboundS*}
Let $a\in\A(x,p,m, \delta)$ defined in \eqref{defA(xpm)}. Then
$$\sum_{n=a\:[p]}|a(n)|w(n/x)\ge m^{3/2}\frac{x^{1-\delta/2}}{p^{5/4}}.$$
\end{lemme}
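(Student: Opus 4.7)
The plan is to deduce this lemma from a single application of Hölder's inequality, using the two defining inequalities of the set $\A(x,p,m,\delta)$ introduced in \eqref{defA(xpm)}.

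First I would set $b_n = |a(n)|w(n/x)$, which is nonnegative since $w$ takes values in $[0,1]$ and $a(n)\in\setR$ by assumption. Writing $b_n^2 = b_n^{2/3}\cdot b_n^{4/3}$ and applying Hölder's inequality with conjugate exponents $3/2$ and $3$, I obtain
\[
\sum_{n=a\:[p]} b_n^2 \le \Bigl(\sum_{n=a\:[p]} b_n\Bigr)^{2/3}\Bigl(\sum_{n=a\:[p]} b_n^4\Bigr)^{1/3}.
\]
Rearranging this inequality yields the basic bound
\[
\sum_{n=a\:[p]}|a(n)|w(n/x) \;\ge\; \frac{\bigl(\sum_{n=a\:[p]} a(n)^2 w(n/x)^2\bigr)^{3/2}}{\bigl(\sum_{n=a\:[p]} a(n)^4 w(n/x)^4\bigr)^{1/2}}.
\]

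Next, I would plug in the two constraints that characterize membership in $\A(x,p,m,\delta)$: the lower bound $\sum_{n=a\:[p]} a(n)^2 w(n/x)^2 > mx/p$ raises the numerator to at least $(mx/p)^{3/2}$, while the upper bound $\sum_{n=a\:[p]} a(n)^4 w(n/x)^4 \le x^{1+\delta}/\sqrt{p}$ controls the denominator by $x^{(1+\delta)/2}/p^{1/4}$. Combining these two estimates gives
\[
\sum_{n=a\:[p]}|a(n)|w(n/x) \;\ge\; \frac{m^{3/2} x^{3/2}/p^{3/2}}{x^{(1+\delta)/2}/p^{1/4}} \;=\; m^{3/2}\,\frac{x^{1-\delta/2}}{p^{5/4}},
\]
which is exactly the claimed inequality.

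There is no real obstacle here: the lemma is essentially packaging the two moment conditions defining $\A(x,p,m,\delta)$ into a lower bound on the first absolute moment via the standard interpolation inequality $\|b\|_2^2 \le \|b\|_1^{2/3}\|b\|_4^{4/3}$. The exponents $3/2$ in $m$ and $5/4$ in $p$ arise naturally from the arithmetic of combining the exponents, and the $x^{1-\delta/2}$ reflects the loss in the fourth moment bound.
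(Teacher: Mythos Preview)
Your proof is correct and essentially identical to the paper's: both apply H\"older's inequality with exponents $3/2$ and $3$ to obtain $\sum b_n^2 \le (\sum b_n)^{2/3}(\sum b_n^4)^{1/3}$, then insert the two defining inequalities of $\A(x,p,m,\delta)$ to conclude.
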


\begin{proof}
H\"older's inequality yields
$$mx/p< \sum_{n=a\:[p]}a(n)^{2}w(n/x)^{2}\le \left(\sum_{n=a\:[p]}|a(n)|w(n/x)\right)^{2/3}
\left(\sum_{n=a\:[p]}a(n)^{4}w(n/x)^{4}\right)^{1/3}.$$

Hence 
$$\sum_{n=a\:[p]}|a(n)|w(n/x)\ge (mx/p)^{3/2}\left(\frac{x^{1+ \delta}}{\sqrt p}\right)^{-1/2}
\ge m^{3/2}\frac{x^{1-\delta/2}}{p^{5/4}}.$$
\end{proof}

We can now prove the main Theorem of this subsection which implies the second assertion of Theorem \ref{theo-lowboundT}.

\begin{theo}\label{theo-lowboundsmoothnotHecke}
Let $f$, $x$ and $p$ be as above. Assume that $f$ is a complete eigenform. For any $a\:[p]$, define $\T_{a,p}^\pm(x,\alpha; w)$ 
as in \eqref{defTsmooth}. Let $\alpha\in (1/8,1/7]$. Then, for any $\delta>0$ small enough and any $x$ large enough 
$$\left|\left\{a\:[p] \;\vline\; \min \big(\T_{a,p}^+(x,\alpha; w),\T_{a,p}^-(x,\alpha; w)\big)
\gg \frac{x^{1-2\delta}}{p^{7/4}}\right\}\right| \gg_{f,\delta}  \frac{p^{3/4}}{x^{\delta/2}} $$
as long as $x^{1/2+ \epsilon}\ll p\ll  x^{4\alpha- \epsilon}$ for some $ \epsilon>0$.
\end{theo}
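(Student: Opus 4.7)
The plan is to combine Propositions \ref{prop-lowboundvar} and \ref{prop-boundB(xpm)} to produce a large set of residue classes $a\:[p]$ on which the first through fourth dyadic sums of $a(n)w(n/x)$ in progression $a\:[p]$ are simultaneously well-controlled, and then to run the two-step sign-detection argument of Lemmas \ref{lemma-elmt1} and \ref{lemma-elmt2} inside each such class, exactly as in the proof of Theorem \ref{theo-lowboundsmooth}.

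More concretely, I would fix $m>0$ small enough for Proposition \ref{prop-lowboundvar} and $\delta>0$ small enough for both Propositions \ref{prop-lowboundvar} and \ref{prop-boundB(xpm)}, and focus on the classes in $\mathcal C(x,p,m,\delta):=\A(x,p,m,\delta)\setminus\B(x,p,\delta)$. By those two propositions, $|\mathcal C(x,p,m,\delta)|\gg_{f,\delta}x^{-\delta/2}p^{3/4}$, which already produces the prescribed number of good classes. For every $a\in\mathcal C$, the definitions of $\A$ and $\B$ give simultaneously
$$\Big|\sum_{n\equiv a\:[p]}a(n)w(n/x)\Big|\le \frac{x^{1-\delta}}{p^{5/4}},\quad \sum_{n\equiv a\:[p]}a(n)^2w(n/x)^2\le \frac{x^{1+\delta}}{p^{3/4}},\quad \sum_{n\equiv a\:[p]}a(n)^4w(n/x)^4\le \frac{x^{1+\delta}}{\sqrt p},$$
together with the lower bound $\sum_{n\equiv a\:[p]}|a(n)|w(n/x)\ge m^{3/2}x^{1-\delta/2}/p^{5/4}$ from Lemma \ref{lemme-lowboundS*}. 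The signed sum is smaller than the absolute-value sum by a factor $x^{-\delta/2}=o(1)$, so Lemma \ref{lemma-elmt1} applied to $b(n)=a(n)w(n/x)$ yields positive and negative parts each $\gg x^{1-\delta/2}/p^{5/4}$.

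To conclude, I would apply Lemma \ref{lemma-elmt2} inside each of the two sets $X^{\pm}=\{n\equiv a\:[p]:\pm a(n)>0,\ w(n/x)\neq 0\}$ with $b(n)=\pm a(n)w(n/x)$ and threshold $c(n)=n^{-\alpha}w(n/x)$. By Lemma \ref{lemme-sumnalpha}, $\sum_{X^{\pm}}c(n)\ll x^{1-\alpha}/p$, which is $o(x^{1-\delta/2}/p^{5/4})$ precisely when $p\ll x^{4\alpha-2\delta}$; this is where the hypothesis $p\ll x^{4\alpha-\epsilon}$ enters, and it is also what pins the admissible range of $\alpha$ to $(1/8,1/7]$. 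Taking $V=x^{1+\delta}/p^{3/4}$ from the second-moment bound on $\mathcal C$, Lemma \ref{lemma-elmt2} produces
$$\T_{a,p}^{\pm}(x,\alpha;w)\gg \left(\frac{x^{1-\delta/2}}{p^{5/4}}\right)^{2}\cdot\frac{p^{3/4}}{x^{1+\delta}}=\frac{x^{1-2\delta}}{p^{7/4}},$$
which is the stated lower bound.

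The hardest part of this result has already been dispatched upstream, in Proposition \ref{prop-4moment}, whose proof relies on Waldspurger's formula and Heath-Brown's large sieve for quadratic characters, and hence on the complete eigenform hypothesis; it is this global fourth-moment bound that makes Propositions \ref{prop-lowboundvar} and \ref{prop-boundB(xpm)} possible. Once $\mathcal C$ is known to be sizeable, the argument above is a quantitative refinement of the sign-extraction strategy of Theorem \ref{theo-lowboundsmooth}, and the only care required is the book-keeping of exponents ensuring that $n^{-\alpha}w(n/x)$ stays negligible against the absolute-value sum, which is exactly what fixes the upper endpoint $p\ll x^{4\alpha-\epsilon}$.
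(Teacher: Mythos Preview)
Your proposal is correct and follows essentially the same argument as the paper: you work on $\A(x,p,m,\delta)\setminus\B(x,p,\delta)$, use Lemma \ref{lemme-lowboundS*} to dominate the signed sum by the absolute-value sum, apply Lemma \ref{lemma-elmt1} to separate the positive and negative parts, and then invoke Lemma \ref{lemma-elmt2} with the second-moment bound coming from $a\notin\B(x,p,\delta)$ and the threshold $c(n)=n^{-\alpha}w(n/x)$ controlled via Lemma \ref{lemme-sumnalpha}. The exponent book-keeping and the identification of $p\ll x^{4\alpha-\epsilon}$ as the constraint making $\sum c(n)$ negligible are exactly as in the paper.
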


\begin{proof}
Let $a\in\A(x,p,m, \delta)\backslash \B(x,p, \delta)$. By Propositions \ref{prop-lowboundvar} and \ref{prop-boundB(xpm)}, such
$a\:[p]$ exists for $m$ and $\delta$ small enough and there are $\gg_{f, \delta} x^{-\delta/2} p^{3/4}$ of them.

Lemma \ref{lemme-lowboundS*} implies that
$$\left|\sum_{n=a\:[p]}a(n)w(n/x)\right|\le\frac{x^{1- \delta}}{p^{5/4}}=o\left(\sum_{n=a\:[p]}|a(n)|w(n/x)\right)$$
so, by Lemma \ref{lemma-elmt1} and for $x$ large enough, 
$$\pm\underset{n=a\:[p]}{{\sum}^{\pm}}a(n)w(n/x)\gg \frac{x^{1-\delta/2}}{p^{5/4}}$$
where $\underset{n=a\:[p]}{{\sum}^{\pm}}$ means that we restrict the sum over $n=a\:[p]$ such that $a(n)>0$ or $a(n)<0$ 
respectively.

Also, by Lemma \ref{lemme-sumnalpha}, 
$$\sum_{n=a\:[p]}n^{-\alpha}w(n/x)\ll \frac{x^{1-\alpha}}{p} = 
\frac{x^{1-\delta/2}}{p^{5/4}} \frac{p^{1/4}}{x^{\alpha-\delta/2}}=o\left(\frac{x^{1-\delta/2}}{p^{5/4}}\right)$$
for $\delta$ small enough because $ p\ll  x^{4\alpha- \epsilon}$. Hence, recalling that $a\not\in \B(x,p, \delta)$,
we can apply Lemma \ref{lemma-elmt2} and obtain
$$\T_{a,p}^\pm(x,\alpha; w)\gg \frac{x^{2-\delta}p^{-5/2}}{x^{1+ \delta}p^{-3/4}}= \frac{x^{1-2\delta}}{p^{7/4}}.$$
\end{proof}

\newpage
\begin{bibdiv}
\begin{biblist}

\bib{Arias}{article}{
   author={Arias-de-Reyna, Sara},
   author={Inam, Ilker},
   author={Wiese, Gabor},
   title={On conjectures of Sato-Tate and Bruinier-Kohnen},
   journal={Ramanujan J.},
   volume={36},
   date={2015},
   number={3},
   pages={455--481}
}

\bib{AtkinLehner}{article}{
   author={Atkin, A. O. L.},
   author={Lehner, J.},
   title={Hecke operators on $\Gamma _{0}(m)$},
   journal={Math. Ann.},
   volume={185},
   date={1970},
   pages={134--160}
}

\bib{AtkinLi}{article}{
   author={Atkin, A. O. L.},
   author={Li, Wen Ch'ing Winnie},
   title={Twists of newforms and pseudo-eigenvalues of $W$-operators},
   journal={Invent. Math.},
   volume={48},
   date={1978},
   number={3},
   pages={221--243}
}

\bib{BLGHT}{article}{
   author={Barnet-Lamb, Tom},
   author={Geraghty, David},
   author={Harris, Michael},
   author={Taylor, Richard},
   title={A family of Calabi-Yau varieties and potential automorphy II},
   journal={Publ. Res. Inst. Math. Sci.},
   volume={47},
   date={2011},
   number={1},
   pages={29--98}
}

\bib{BelabasCohen}{article}{
   author={Belabas, Karim},
   author={Cohen, Henri},
   title={Modular forms in Pari/GP},
   journal={Res. Math. Sci.},
   volume={5},
   date={2018},
   number={3},
   pages={Paper No. 37, 19}
}
\bib{BruiKoh}{article}{
   author={Bruinier, Jan Hendrik},
   author={Kohnen, Winfried},
   title={Sign changes of coefficients of half integral weight modular
   forms},
   conference={
      title={Modular forms on Schiermonnikoog},
   },
   book={
      publisher={Cambridge Univ. Press, Cambridge},
   },
   date={2008},
   pages={57--65}
}

\bib{Darreye}{article}{
	author = {Darreye, Corentin} 
	Title = {Fourier coefficients of modular forms of half-integral weight in arithmetic progressions},
	journal={Int. Math. Res. Not. IMRN}
	Year = {2020}
	}

\bib{Darreye2}{article}{
	author = {Darreye, Corentin} 
	Title = {Sur la répartition des coefficients des formes modulaires de poids demi-entier},
	journal={Thèse de doctorat de l'Université de Bordeaux}
	Year = {2020}
	}
	
\bib{Das}{article}{
   author={Das, Soumya},
   title={Omega result for Fourier coefficients of half-integral weight and Siegel modular forms},
    journal={Preprint}
   date={2019}
}

   \bib{weil1}{article}{
   author={Deligne, Pierre},
   title={La conjecture de Weil. I},
   language={French},
   journal={Publ. Math. Inst. Hautes \'{E}tudes Sci.},
   number={43},
   date={1974},
   pages={273--307}
}
   
% \bib{DiaShu}{book}{
%   author={Diamond, Fred},
%   author={Shurman, Jerry},
%   title={A first course in modular forms},
%   series={Graduate Texts in Mathematics},
%   volume={228},
%   publisher={Springer-Verlag, New York},
%   date={2005}
%}
%

\bib{FGKM}{article}{
   author={Fouvry, \'{E}tienne},
   author={Ganguly, Satadal},
   author={Kowalski, Emmanuel},
   author={Michel, Philippe},
   title={Gaussian distribution for the divisor function and Hecke
   eigenvalues in arithmetic progressions},
   journal={Comment. Math. Helv.},
   volume={89},
   date={2014},
   number={4},
   pages={979--1014}
}

%\bib{godber}{article}{
%   author={Godber, Daniel},
%   title={Additive twists of Fourier coefficients of modular forms},
%   journal={J. Number Theory},
%   volume={133},
%   date={2013},
%   number={1},
%   pages={83--104}
%}
%
%\bib{Gut}{book}{
%   author={Gut, Allan},
%   title={Probability: a graduate course},
%   series={Springer Texts in Statistics},
%   edition={2},
%   publisher={Springer, New York},
%   date={2013}
%}

\bib{GKS}{article}{
   author={Gun, S.},
   author={Kohnen, W.},
   author={Soundararajan, K.},
   title={Large Fourier coefficients of half-integer weight modular forms},
   date={2020},
   eprint = {https://arxiv.org/pdf/2004.14450.pdf}
}

\bib{Heath-Brown}{article}{
   author={Heath-Brown, D. R.},
   title={A mean value estimate for real character sums},
   journal={Acta Arith.},
   volume={72},
   date={1995},
   number={3},
   pages={235--275}
}

%\bib{HughesRud}{article}{
%   author={Hughes, C. P.},
%   author={Rudnick, Z.},
%   title={On the distribution of lattice points in thin annuli},
%   journal={Int. Math. Res. Not. IMRN},
%   date={2004},
%   number={13},
%   pages={637--658}
%}
%
\bib{Hulse}{article}{
   author={Hulse, Thomas A.},
   author={Kiral, E. Mehmet},
   author={Kuan, Chan Ieong},
   author={Lim, Li-Mei},
   title={The sign of Fourier coefficients of half-integral weight cusp
   forms},
   journal={Int. J. Number Theory},
   volume={8},
   date={2012},
   number={3},
   pages={749--762}
}

\bib{InamWiese}{article}{
   author={Inam, Ilker},
   author={Wiese, Gabor},
   title={Equidistribution of signs for modular eigenforms of half integral
   weight},
   journal={Arch. Math. (Basel)},
   volume={101},
   date={2013},
   number={4},
   pages={331--339}
}

\bib{Iwcoef}{article}{
   author={Iwaniec, Henryk},
   title={Fourier coefficients of modular forms of half-integral weight},
   journal={Invent. Math.},
   volume={87},
   date={1987},
   number={2},
   pages={385--401}
}

\bib{IwaTopics}{book}{
   author={Iwaniec, Henryk},
   title={Topics in classical automorphic forms},
   series={Graduate Studies in Mathematics},
   volume={17},
   publisher={American Mathematical Society, Providence, RI},
   date={1997},
   pages={xii+259}
}

\bib{IwKo}{book}{
   author={Iwaniec, Henryk},
   author={Kowalski, Emmanuel},
   title={Analytic number theory},
   series={American Mathematical Society Colloquium Publications},
   volume={53},
   publisher={American Mathematical Society, Providence, RI},
   date={2004}
   }
   
%\bib{squarefree}{article}{
%        author = {Jiang, Y-J} 
%        author = {Lau, Yuk-Kam} 
%        author = {Lü, G-S} 
%        author = {Royer, Emmanuel} 
%        author = {Wu, Jie},	
%	title = {On Fourier coefficients of modular forms of half
%integral weight at squarefree integers},
%         date = {2016},
%	eprint= {https://arxiv.org/abs/1602.08924},
%	}
%
\bib{sign2}{article}{
	author = {Jiang, Y-J} 
        author = {Lau, Yuk-Kam} 
        author = {Lü, G-S} 
        author = {Royer, Emmanuel} 
        author = {Wu, Jie},
	Title = {Sign changes of Fourier coefficients of modular forms of half integral weight, 2},
	Year = {2018},
	eprint = {https://arxiv.org/abs/1602.08922}}

\bib{Knapp}{book}{
   author={Knapp, Anthony W.},
   title={Elliptic curves},
   series={Mathematical Notes},
   volume={40},
   publisher={Princeton University Press, Princeton, NJ},
   date={1992}
}

\bib{Kohnen82}{article}{
   author={Kohnen, Winfried},
   title={Newforms of half-integral weight},
   journal={J. Reine Angew. Math.},
   volume={333},
   date={1982},
   pages={32--72}
}

\bib{Kohnen85}{article}{
   author={Kohnen, Winfried},
   title={Fourier coefficients of modular forms of half-integral weight},
   journal={Math. Ann.},
   volume={271},
   date={1985},
   number={2},
   pages={237--268}
}

\bib{KohnenZagier}{article}{
   author={Kohnen, W.},
   author={Zagier, D.},
   title={Values of $L$-series of modular forms at the center of the
   critical strip},
   journal={Invent. Math.},
   volume={64},
   date={1981},
   number={2},
   pages={175--198}
}

\bib{coursKo}{book}{
   author={Kowalski, Emmanuel},
   title={Un cours de th\'{e}orie analytique des nombres},
   language={French},
   series={Cours Sp\'{e}cialis\'{e}s [Specialized Courses]},
   volume={13},
   publisher={Soci\'{e}t\'{e} Math\'{e}matique de France, Paris},
   date={2004}
}

%\bib{KR}{article}{
%   author={Kowalski, Emmanuel},
%   author={Ricotta, Guillaume},
%   title={Fourier coefficients of $GL(N)$ automorphic forms in arithmetic
%   progressions},
%   journal={Geom. Funct. Anal.},
%   volume={24},
%   date={2014},
%   number={4},
%   pages={1229--1297}
%}
%
\bib{sign}{article}{
   author={Lau, Yuk-kam},
   author={Royer, Emmanuel},
   author={Wu, Jie},
   title={Sign of Fourier coefficients of modular forms of half-integral
   weight},
   journal={Mathematika},
   volume={62},
   date={2016},
   number={3},
   pages={866--883}
}

\bib{LauZhao}{article}{
   author={Lau, Yuk-Kam},
   author={Zhao, Lilu},
   title={On a variance of Hecke eigenvalues in arithmetic progressions},
   journal={J. Number Theory},
   volume={132},
   date={2012},
   number={5},
   pages={869--887}
}

\bib{LesRad}{article}{
   author={Lester, Stephen},
   author={Radziwi\l\l, Maksym},
   title={Signs of Fourier coefficients of half-integral weight modular forms},
   date={2019},
   eprint = {https://arxiv.org/pdf/1903.05811.pdf}
}

%\bib{LesYesh}{article}{
%   author={Lester, Stephen},
%   author={Yesha, Nadav},
%   title={On the distribution of the divisor function and Hecke eigenvalues},
%   journal={Israel J. Math.},
%   volume={212},
%   date={2016},
%   number={1},
%   pages={443--472}
%}
%
%\bib{Luaverage}{article}{
%   author={L\"{u}, Guangshi},
%   title={The average value of Fourier coefficients of cusp forms in
%   arithmetic progressions},
%   journal={J. Number Theory},
%   volume={129},
%   date={2009},
%   number={2},
%   pages={488--494}
%}
%
%%\bib{indepsqrt}{article}{
%%	Author = {Madore, David A.},
%%	Title = {Indépendance linéaire de certains nombres algébrique}
%%	}

\bib{Manickam90}{article}{
   author={Manickam, M.},
   author={Ramakrishnan, B.},
   author={Vasudevan, T. C.},
   title={On the theory of newforms of half-integral weight},
   journal={J. Number Theory},
   volume={34},
   date={1990},
   }

\bib{Manickam11}{article}{
   author={Manickam, M.},
   title={Newforms of half-integral weight on $\Gamma_0(8N)$},
   conference={
      title={Number theory},
   },
   book={
      series={Ramanujan Math. Soc. Lect. Notes Ser.},
      volume={15},
      publisher={Ramanujan Math. Soc., Mysore},
   },
   date={2011},
   pages={63--71},
   }
   
   \bib{MeherMurty}{article}{
   author={Meher, Jaban},
   author={Murty, M. Ram},
   title={Sign changes of Fourier coefficients of half-integral weight cusp
   forms},
   journal={Int. J. Number Theory},
   volume={10},
   date={2014},
   number={4},
   pages={905--914}
}
%		
%\bib{voro}{article}{
%   author={Miller, Stephen D.},
%   author={Schmid, Wilfried},
%   title={Summation formulas, from Poisson and Voronoi to the present},
%   conference={
%      title={Noncommutative harmonic analysis},
%   },
%   book={
%      series={Progr. Math.},
%      volume={220},
%      publisher={Birkh\"{a}user Boston, Boston, MA},
%   },
%   date={2004},
%   pages={419--440}
%}
%
%
%
%\bib{MV}{book}{
%   author={Montgomery, Hugh L.},
%   author={Vaughan, Robert C.},
%   title={Multiplicative number theory. I. Classical theory},
%   series={Cambridge Studies in Advanced Mathematics},
%   volume={97},
%   publisher={Cambridge University Press, Cambridge},
%   date={2007}
%}
%		
%
\bib{Niwa}{article}{
   author={Niwa, Shinji},
   title={Modular forms of half integral weight and the integral of certain
   theta-functions},
   journal={Nagoya Math. J.},
   volume={56},
   date={1975},
   pages={147--161}
}

\bib{Ono}{book}{
   author={Ono, Ken},
   title={The web of modularity: arithmetic of the coefficients of modular
   forms and $q$-series},
   series={CBMS Regional Conference Series in Mathematics},
   volume={102},
   publisher={Published for the Conference Board of the Mathematical
   Sciences, Washington, DC; by the American Mathematical Society,
   Providence, RI},
   date={2004}
}

\bib{OnoSkin}{article}{
   author={Ono, Ken},
   author={Skinner, Christopher},
   title={Non-vanishing of quadratic twists of modular $L$-functions},
   journal={Invent. Math.},
   volume={134},
   date={1998},
   number={3},
   pages={651--660}
}

\bib{Shi}{article}{
   author={Shimura, Goro},
   title={On modular forms of half integral weight},
   journal={Ann. of Math. (2)},
   volume={97},
   date={1973},
}	

\bib{Shi2}{article}{
   author={Shimura, Goro},
   title={On the Fourier coefficients of Hilbert modular forms of
   half-integral weight},
   journal={Duke Math. J.},
   volume={71},
   date={1993},
   number={2},
   pages={501--557}
}

\bib{Waibel}{article}{
   author={Waibel, Fabian},
   title={Fourier coefficients of half-integral weight cusp forms and
   Waring's problem},
   journal={Ramanujan J.},
   volume={47},
   date={2018},
   number={1},
   pages={185--200}
}

\bib{Wald}{article}{
   author={Waldspurger, J.-L.},
   title={Sur les coefficients de Fourier des formes modulaires de poids
   demi-entier},
   language={French},
   journal={J. Math. Pures Appl. (9)},
   volume={60},
   date={1981},
   number={4},
   pages={375--484}
}

\end{biblist}
\end{bibdiv}

\end{document}